\documentclass[11pt]{article}
 \usepackage{amsmath, amssymb,amscd} 
 \usepackage{enumerate} 
 \usepackage[active]{srcltx} 
 \usepackage{amsthm}

\usepackage{natbib} 
\bibliographystyle{plainnat}

\usepackage{amsthm} 

\theoremstyle{definition}
\newtheorem{defn}{Definition}

\newtheorem{ex}{Example}

\theoremstyle{plain}
\newtheorem{thm}{Theorem}
\newtheorem{lemma}{Lemma}
\newtheorem{prop}{Proposition}
\newtheorem{cor}{Corollary}

\usepackage{mathptmx} 

\newcommand{\NN}{\mathbb{N}}

\newcommand{\mset}{{\mathcal Q}}
\newcommand{\luint}[3]{{[#1\underline{#2}#3, #1\overline{#2}#3]}}

%
%
%

\usepackage{natbib} 
\bibliographystyle{plain}

\author{Damjan \v{S}kulj}
\title{Interval matrix differential equations}

\begin{document}
\maketitle
\begin{abstract}
	The matrix differential equation $x'(t) = Q(t)x(t), x(0) = x_0$ is considered in the case where $Q(t)$ is an unspecified matrix function of time, with the only constraint that $Q(t)\in \mset$ for every $t$, where $\mset$ is a prescribed closed and convex set of matrices. We provide the solution of the generalised equation by defining an exponential of a set of matrices. Although the defintion is not directly applicable to calculate the solutions, we provide an approximate method along with the estimation of maximal possible error. In particular, the method allows estimating continuous time imprecise Markov chains. 
\end{abstract}

{\bf Key words } matrix differential equation, interval matrix, exponential of an interval matrix, continuous time imprecise Markov chain

{\bf Math. subj. class.} 15B15, 15A16

\section{Problem setting}\label{s-ps}
We consider the matrix differential equation of the form
\begin{equation}\label{mde}
 x'(t) = Qx(t),\qquad x(0) = x_0. 
\end{equation}
where $Q$ is replaced with a compact convex set of matrices, called an \emph{interval matrix}, usually we denote it by $\mset$; and the intial vector $x_0$ is replaced by an interval vector. Interval matrices have been proven to efficiently model the uncertainty in parameter estimates, such as in modelling discrete time Markov chains with unknown parameters (see for instance \citet{hart:98,decooman-2008-a,skulj:09}). One of our main motivations is the modelling of continuous time Markov chains with uncertain parameters. We build on the work of \citet{hart:ctmc}, who studied the related problem requiring that $Q(t)$ is a piecewise continuous matrix function whose values lie between given matrices $\underline Q$ and $\overline Q$. Moreover, he requires zero column sums of all matrices $Q(t)$ and that the elements of $x_0$ sum to one. He then shows that the sets of all possible solutions are compact and convex, and studies their properties when time $t$ approaches infinity. 

In this paper we relax most of the \citeauthor{hart:ctmc}'s assumptions. Thus we allow arbitrary compact convex sets of matrices and in addition we omit any continuity assumptions. With regard to solutions we provide a directly applicable methods for finding approximate solutions, based on linear programming techniques, together with error estimates. 

There are two most common interpretations of the equation \eqref{mde} if instead of a fixed matrix $Q$, a set of possible matrices $\mset$ is given. The first one is the sensitivity analysis interpretation, where it is assumed that the matrix $Q$ is constant in time although all that is known is that it is contained in $\mset$ (see for  instance \citet{oppen:88,goldsztejn:2009}).

In this paper however we adopt another interpretation, which is similar to the one adopted by \citet{hart:ctmc}. That is, we allow that the matrix $Q$ depends on time $t$ in some very general unspecified way. Unlike \citeauthor{hart:ctmc} we do not even make any continuity assumptions. The only requirement being that $Q(t)$ is an element of $\mset$ at every time $t$.

Another related problem is when the matrix $Q(t)$ is a known function of time $t$ (see e.g. \citet{bohner:2001,dacunha:2005} and references therein). Our approach would rougly correspond to finding the bounds for all possible such solutions when only the bounds are given for $Q(t)$.

When the general equation \eqref{mde} is considered the closed form solution is obtained using matrix exponentials:
\begin{equation*}\label{general-solution}
 x(t) = e^{tQ}x_0. 
\end{equation*}
The approach to the solution of the generalised problem that we propose is based on the generalisation of the matrix exponential for the case of interval matrices. We show that this is indeed possible and provide methods to find approximations, with arbitrary precision that can be estimated in advance.

The paper has the following structure. In Section~\ref{s-imde} we formalize the generalisation of matrix differential equations. The solution of the genaralised equations is based on the defintion of an exponential of interval matrix defined and analysed in Section~\ref{s-eim}, where a method for practical approximations is presented along with the estimation of errors. In Section~\ref{s-simde} we prove that the exponentials of interval matrices indeed provide the solution of the interval matrix differential equation. Finally, in Section~\ref{s-rcimc} we explain how the method can be applied to continuous time imprecise Markov chains, and give an example. Some of the technical auxiliary results are listed in the appendix.


\section{Interval matrix differential equations}\label{s-imde}
\subsection{Intervals of vectors and matrices}\label{ss-ivm}
All vectors and matrices used here are finite-dimensional and all vectors are assumed to be column vectors. We will write $x\le y$ when $x_i\le y_i$ holds for all their components. Moreover, we will involve sets of vectors and sets of matrices. All operations between sets of matrices or sets of vectors are meant to be elementwise. Thus, for instance, given a set of matrices $\mset$ and a set of vectors $\mathcal X$, we denote 
\begin{equation*}
 \mset \mathcal X = \{ Qx\colon Q\in \mset, x\in \mathcal X \}. 
\end{equation*}
\begin{defn}
Let $\underline x$ and $\overline x$ be vectors such that $\underline x\le \overline x$. Then the set of  vectors $[\underline x, \overline x]:=\{ x\colon \underline x\le x \le \overline x\}$ is called an {\em interval vector}. 
\end{defn}
\begin{defn}
 A set of matrices $\mset$ has {\em separately specified rows} if for every $Q, Q'\in \mset$ the matrix $\tilde Q$, such that $\tilde Q_{ij} = Q_{ij}$ for all $j$ and $i\neq i_0$ and $\tilde Q_{i_0j} = Q'_{i_0j}$ for all $j$, also belongs to $\mset$. 
\end{defn}
This means that rows of matrices in $\mset$ can be chosen independently from some sets of row vectors. In the case, such as \citeauthor{hart:ctmc}'s, where the set of matrices consists of all matrices $Q$ that satisfy the equation $\underline Q\le Q\le \overline Q$, this property is already implicity assumed by the defintion. 
\begin{prop}\label{prop_interval}
Let $\mset$ be a compact convex set of matrices with separately specified rows and $x$ a vector. Then there exist matrices $Q_0, Q_1\in\mset$ such that $Q_0x \le Qx$  and $Q_1x\ge Q x$ for every $Q\in\mset$.
\end{prop}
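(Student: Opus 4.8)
The plan is to construct the extremal matrices $Q_0$ and $Q_1$ row by row, exploiting the separately specified rows hypothesis to reduce the problem to a collection of independent linear optimisation problems over the rows. Fix the vector $x$. For each row index $i$, consider the set $R_i = \{ q \in \RR^n : q \text{ is the } i\text{-th row of some } Q \in \mset \}$; by the separately specified rows property, a matrix belongs to $\mset$ if and only if each of its rows lies in the corresponding $R_i$, and moreover $R_i$ is compact and convex because it is the image of the compact convex set $\mset$ under the (linear, continuous) projection onto the $i$-th row.

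The $i$-th component of $Qx$ depends only on the $i$-th row of $Q$, namely it equals $\langle q, x\rangle$ for $q$ the $i$-th row. So I would apply a standard compactness argument: the linear functional $q \mapsto \langle q, x \rangle$ attains its minimum and its maximum on the compact set $R_i$, say at $q_i^{\min}$ and $q_i^{\max}$ respectively. Now assemble $Q_0$ to be the matrix whose $i$-th row is $q_i^{\min}$ for every $i$, and $Q_1$ the matrix whose $i$-th row is $q_i^{\max}$ for every $i$. By the separately specified rows property both $Q_0$ and $Q_1$ lie in $\mset$. For any $Q \in \mset$ with $i$-th row $q$, we have $(Q_0 x)_i = \langle q_i^{\min}, x\rangle \le \langle q, x \rangle = (Qx)_i$ and similarly $(Q_1 x)_i = \langle q_i^{\max}, x \rangle \ge \langle q, x\rangle = (Qx)_i$, so $Q_0 x \le Qx \le Q_1 x$ componentwise for all $Q \in \mset$, which is exactly the claim.

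I do not anticipate a serious obstacle here; the only points requiring care are (i) verifying that the row-projection of $\mset$ is compact and convex so that the extrema are attained — this is routine since projections are linear and continuous and continuous images of compact sets are compact, convex images of convex sets are convex — and (ii) making sure the separately specified rows property is invoked correctly to guarantee that the "diagonal" assembly of optimal rows actually produces a member of $\mset$. Strictly, the definition as stated only lets one swap a single row at a time, so I would note that iterating the single-row swap $n$ times (replacing row $1$, then row $2$, and so on, starting from any fixed element of $\mset$) shows that any matrix whose every row lies in the respective $R_i$ belongs to $\mset$; this induction is the one place where one should be slightly explicit rather than wave hands. Convexity of $\mset$ itself is not actually needed for the argument beyond ensuring the $R_i$ are convex, though compactness of $\mset$ is essential for the extrema to exist.
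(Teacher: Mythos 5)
Your proof is correct and follows essentially the same route as the paper's: optimise the $i$-th component of $Qx$ over the compact set for each $i$, then assemble the optimal rows into a single matrix using the separately specified rows property. Your extra remark that the one-row-swap definition must be iterated to justify the assembled matrix lying in $\mset$ is a point the paper's proof leaves implicit, but it does not change the argument.
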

\begin{proof}
 Let $\underline y_i = \min_{Q\in \mset} (Qx)_i$ and let $Q_i$ be a matrix that minimizes $(Qx)_i$. Then let $\tilde Q$ be the matrix whose $i$th row is equal to the $i$th row of $Q_i$ for every $i$. Then, clearly, $(\tilde Qx)_i = (Q_ix)_i$ which implies that $(\tilde Qx)_i\le (Qx)_i$ for every $Q\in \mset$ and every component $i$, and this completes the proof. 
\end{proof}
If $\mset$ is a compact convex set of matrices with separately specified rows and $x$ a vector then their product is the interval
\begin{equation}\label{Q-interval}
 \mset x = [\underline Qx, \overline Qx], 
\end{equation} 
where $\underline Qx := \min_{Q\in \mset} Qx$ and $\overline Qx := \max_{Q\in \mset} Qx$, which exist by the above proposition. Moreover, when an interval matrix is multiplied with an interval vector, the result is again an interval:
\begin{equation*}
\mset [\underline x, \overline x]= [\underline Q\,\underline x, \overline Q\,\overline x].
\end{equation*}
Thus every compact convex set of matrices $\mset$ with separately specified rows defines a (non-linear) transformation of the set of interval vectors to itself. 
\begin{defn}
An {\em interval matrix} is a compact and convex set of matrices $\mset$ with separately specified rows. 
\end{defn}
\subsection{The formulation of the interval matrix differential equation}\label{ss-fimde}
We have shown that an interval matrix $\mset$ maps an interval vector $\luint{}{x}{}$ to the interval vector $[\underline Q\,\underline x, \overline Q\,\overline x]$. Consider all vector valued maps $x(t)$ satisfying the following inequalities:
\begin{align}
 & x(0) \in \luint{}{x}{_0} \label{initial_conds} \\
 & \limsup_{h\to 0} \frac{x(t+h)-x(t)}{h} \le \overline{Q}x(t) \label{diff_ineq_sup}\\
 & \liminf_{h\to 0} \frac{x(t+h)-x(t)}{h} \ge \underline{Q}x(t), \label{diff_ineq_inf}
\end{align}
for every $t\ge 0$. 

Let $\mathcal X$ denote the set of all possible solutions of the above system of differential inequalities, i.e. the set of all vector functions $x(t)$ satisfying the inequalities \eqref{initial_conds}, \eqref{diff_ineq_sup} and \eqref{diff_ineq_inf}. We are concerned with the sets of vectors $\mathcal X(t) := \{ x(t) \colon x\in \mathcal X\}$ that the possible solutions can take at time $t$. We will show that every set $\mathcal X(t)$ is an interval vector of the form $[\underline x(t), \overline x(t)]$. Moreover, we will provide the method to efficiently calculate its bounds. 

The solution of the classical matrix differential equation \eqref{mde} is obtained by multiplying the initial vector $x_0$ with the exponential of the matrix $Q$ to obtain $x(t) = e^{tQ} x_0$. In the next section we define a generalised exponential of an interval matrix which will provide the solution of the generalised equation. 
\subsection{Computational considerations}\label{ss-cc}
Two crucial considerations when doing computation with interval matrices are needed. One is the representation of interval matrices and the other is the computation of the interval  bounds \eqref{Q-interval}. 

Convex sets, and in particular iterval matrices, are represented via sets of constraints that in practical cases are usually finite and allow the computation of the bounds of the intervals of the form \eqref{Q-interval} using linear programming techniques. One such example of constraints is the most commonly used interval of matrices
\begin{equation*}\label{matrix-interval}
 [\underline Q, \overline Q] = \{ Q\colon \underline Q \le Q\le \overline Q\}.
\end{equation*}
Problems usually occur when the set of matrices is transformed in a non linear way. In our case the transformation is the exponential of an interval matrix defined in the next section. When such transformations are applied, not only it is not easy to find the constraints representing the transformed set, but also the cardinality of the set of such constraints may exponentially increase or even become infinite. For this reason we strive to besides giving the adequate theoretical definitions we also develop methods for efficient calculations, which are usually approximate methods. 

\section{The exponential of an interval matrix}\label{s-eim}
\subsection{Definition}\label{ss-d}
Our aim is defining a set of matrices $\exp(\mset)$ that will provide the solution to the set of inequalities \eqref{initial_conds}, \eqref{diff_ineq_sup} and \eqref{diff_ineq_inf} in the following sense:
\begin{equation*}
 \mathcal X(t) = \exp(t\mset)[\underline x_0, \overline x_0] = \{ Px \colon P\in \exp(t\mset), x \in [\underline x_0, \overline x_0] \}.
\end{equation*}
We have explained in Section~\ref{s-ps} that there are different interpretations of interval matrix differential equations. When sensitivity analysis interpretation is used, i.e. when it is assumed that the differential equation in question is a classical matrix differential equation with an unknown matrix $Q$, that is known to belong to the set $\mset$, the adequate way to calculate its exponential is to take the set of all exponentials of matrices in $\mset$:
\begin{equation}\label{pseudo_exponential}
 \exp'(\mset) =  \{ e^{Q}\colon Q\in\mset \}.
\end{equation}
This type of exponential and the methods for its efficient calculations have been provided by \citet{goldsztejn:2009,oppen:88}. 

The above exponential, however, is not suitable when our interpretation is adopted. Let us explain the main reason, why the exponential \eqref{pseudo_exponential} does not solve the set of inequalities \eqref{initial_conds}, \eqref{diff_ineq_sup} and \eqref{diff_ineq_inf}. Suppose we are trying to find the upper bound $\overline x(\Delta t_1)$ for the set of solutions at time $\Delta t_1$. We may have, for instance:
\begin{equation*}\label{lin_approx}
 \overline x(\Delta t_1) = e^{\Delta t_1Q_1}\overline x_0.
\end{equation*}
where $Q_1$ is the matrix in $\mset$ that maximizes the value $e^{\Delta t_1Q}\overline x_0$. At time $\Delta t_1+\Delta t_2$ let $Q_2$ be the matrix that maximizes $e^{\Delta t_2Q}\overline x(\Delta t_1)$. But the resulting upper vector $\overline x(\Delta t_1+\Delta t_2)$ is then in general greater than $e^{(\Delta t_1+\Delta t_2)Q}\overline x_0$ for every $Q\in \mset$. 

The exponential serving our purposes is instead defined as follows. 
\begin{defn}\label{interval_exponential}
 Let $\mset$ be an interval matrix with separately specified rows. Then we define
 \begin{equation}\label{interval_exponential_formula}
  e^{\mset} = \mathrm{cl}\left\{ \prod_{i=1}^n e^{d_iQ_i}\colon n\in\NN, \sum_{i=1}^nd_i = 1,  Q_i\in \mset, \text{ for } 1\le i \le n\right\},
 \end{equation}
 where $\mathrm{cl}$ denotes the closure of the set. 
\end{defn} 
Since interval matrices are compact by definition, this implies that the norms of their elements are uniformly bounded by a constant, say $M$. By Lemma~\ref{norm_exp} it easily follows that the norms of elements in $e^{\mset}$ are uniformly bounded as well with the constant $e^M$. Hence, $e^{\mset}$ is a closed set of matrices with uniformly bounded norms, which implies that it also compact. Every element of $P\in e^{\mset}$ is therefore the limit of a sequence $\{ P_n \}$ of the form 
\begin{equation}\label{sequence-exponential}
 P_n = \prod_{i=1}^{k_n} e^{d^n_iQ^n_i},
\end{equation}
where $\sum_{i = 1}^{k_n}d^n_i = 1, d^n_i\ge 0, Q^n_i\in \mset$ for every $n\in \NN$ and $1\le i\le k_n$. 
\begin{lemma}\label{additivity-exponential}
  For every $\alpha$ and $\beta$ we have that
  \begin{equation*}
   e^{(\alpha+\beta) \mset } = e^{\alpha \mset }e^{\beta \mset }.
  \end{equation*}
\end{lemma}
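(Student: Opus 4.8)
The plan is to prove the two inclusions $e^{(\alpha+\beta)\mset} \subseteq e^{\alpha\mset}e^{\beta\mset}$ and $e^{\alpha\mset}e^{\beta\mset} \subseteq e^{(\alpha+\beta)\mset}$ separately, working first with the ``pre-closure'' sets of finite products appearing inside the $\mathrm{cl}$ in \eqref{interval_exponential_formula} and then passing to limits using compactness. Throughout I will use that all the sets involved are compact (as noted after Definition~\ref{interval_exponential}), so that products of convergent sequences converge and closures behave well; I will also tacitly handle the degenerate cases where $\alpha$, $\beta$, or $\alpha+\beta$ vanishes, where the identity reduces to $e^{0\cdot\mset}=\{I\}$ acting as a unit.

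For the forward inclusion, start with a generator $\prod_{i=1}^n e^{(\alpha+\beta)d_iQ_i}$ of $e^{(\alpha+\beta)\mset}$ with $\sum d_i = 1$, $d_i\ge 0$, $Q_i\in\mset$. The idea is to rescale the coefficients: write $(\alpha+\beta)d_i = \alpha d_i + \beta d_i$, but a single factor $e^{\alpha d_i Q_i + \beta d_i Q_i}$ with the \emph{same} matrix equals $e^{\alpha d_i Q_i}e^{\beta d_i Q_i}$ since powers of one matrix commute. Hence $\prod_{i=1}^n e^{(\alpha+\beta)d_iQ_i} = \prod_{i=1}^n e^{\alpha d_iQ_i}e^{\beta d_iQ_i}$. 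Now I would like to split this into $\bigl(\prod_i e^{\alpha d_i Q_i}\bigr)\bigl(\prod_i e^{\beta d_i Q_i}\bigr)$, but these factors do not commute in general, so instead one checks that the interleaved product already has the right form after suitably relabelling: more carefully, one shows $e^{\alpha\mset}e^{\beta\mset}$ is closed under this kind of interleaving. The cleanest route is the reverse inclusion first, then derive the forward one from it by a substitution $\alpha\mapsto\alpha+\beta$, $\beta\mapsto-\beta$ — but that requires negative coefficients, which are not allowed. So I expect the forward inclusion genuinely needs the following observation: in $e^{\alpha\mset}e^{\beta\mset}$ one may take products $\bigl(\prod_{i=1}^m e^{\alpha a_i A_i}\bigr)\bigl(\prod_{j=1}^k e^{\beta b_j B_j}\bigr)$ with $\sum a_i = \sum b_j = 1$; choosing $m=k=n$, $a_i=b_i=d_i$, $A_i=B_i=Q_i$ gives $\bigl(\prod_i e^{\alpha d_iQ_i}\bigr)\bigl(\prod_i e^{\beta d_iQ_i}\bigr)$, which differs from the interleaved product. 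Reconciling these is exactly the technical heart: one refines the partition $\{d_i\}$ on both sides to a common refinement and uses $e^{\alpha(d'+d'')Q}=e^{\alpha d'Q}e^{\alpha d''Q}$ to see both expressions lie in $e^{\alpha\mset}e^{\beta\mset}$ after reindexing. I would write this out as a lemma about common refinements of convex coefficient vectors.

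For the reverse inclusion $e^{\alpha\mset}e^{\beta\mset}\subseteq e^{(\alpha+\beta)\mset}$, take generators $P=\prod_{i=1}^m e^{\alpha a_iA_i}$ and $P'=\prod_{j=1}^k e^{\beta b_jB_j}$ with $\sum a_i=\sum b_j = 1$. Then $PP' = \prod_{i=1}^m e^{\alpha a_i A_i}\prod_{j=1}^k e^{\beta b_j B_j}$ is a product of $m+k$ exponentials; rescale each exponent by writing $\alpha a_i = (\alpha+\beta)\cdot\frac{\alpha a_i}{\alpha+\beta}$ and $\beta b_j = (\alpha+\beta)\cdot\frac{\beta b_j}{\alpha+\beta}$, and check the new coefficients $\frac{\alpha a_i}{\alpha+\beta}$ and $\frac{\beta b_j}{\alpha+\beta}$ are nonnegative and sum to $\frac{\alpha}{\alpha+\beta}+\frac{\beta}{\alpha+\beta}=1$. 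This exhibits $PP'$ as a generator of $e^{(\alpha+\beta)\mset}$. Finally, extend from generators to the full (closed) sets: an arbitrary element of $e^{\alpha\mset}e^{\beta\mset}$ is a limit $P_nP'_n$ with $P_n\to P\in e^{\alpha\mset}$, $P'_n\to P'\in e^{\beta\mset}$ along sequences as in \eqref{sequence-exponential}; each $P_nP'_n$ lies in the pre-closure set for $e^{(\alpha+\beta)\mset}$ by the generator argument, and by continuity of matrix multiplication $P_nP'_n\to PP'$, which therefore lies in $e^{(\alpha+\beta)\mset}$ since that set is closed.

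The main obstacle, as flagged above, is the forward inclusion: the coefficient-rescaling trick that makes the reverse direction almost immediate does not run backwards because it would require subtracting $\beta$, i.e. negative weights. The fix is the common-refinement argument, together with the single-matrix identity $e^{sQ}e^{tQ}=e^{(s+t)Q}$, which lets one both split and merge consecutive equal-matrix factors freely; once that bookkeeping lemma is in place, both inclusions follow and the closure step is routine compactness.
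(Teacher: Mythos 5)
Your reverse inclusion $e^{\alpha\mset}e^{\beta\mset}\subseteq e^{(\alpha+\beta)\mset}$ is correct (concatenate and rescale the coefficients, then pass to the closure by continuity of multiplication), and this is the direction the paper itself dismisses as clear. The genuine gap is in the forward inclusion, which is precisely where the work lies. After rewriting a generator as the interleaved product $\prod_{i=1}^n e^{\alpha d_iQ_i}e^{\beta d_iQ_i}$ you rightly note that the factors do not commute, but the fix you propose --- a ``common refinement'' of the coefficient vectors reconciling the interleaved product with $\bigl(\prod_i e^{\alpha d_iQ_i}\bigr)\bigl(\prod_i e^{\beta d_iQ_i}\bigr)$ --- cannot work: refining only splits a factor $e^{cQ}$ into consecutive factors carrying the \emph{same} matrix $Q$; it never allows reordering factors with different matrices, and the interleaved and separated products are genuinely different matrices in general. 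So no bookkeeping lemma about refinements of coefficient vectors will make them coincide, and at no point do you actually exhibit the interleaved product (or the original generator) as an element of $e^{\alpha\mset}e^{\beta\mset}$.

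The missing idea is simpler and is the paper's argument: take the generator $\prod_{i=1}^n e^{d_iQ_i}$ with $\sum_{i=1}^n d_i=\alpha+\beta$ (there is no need to split every factor), let $k$ be the largest index with $\sum_{i=1}^{k-1}d_i\le\alpha$, and split just that one factor, $d_k=d'_k+d''_k$ with $\sum_{i=1}^{k-1}d_i+d'_k=\alpha$, using $e^{d_kQ_k}=e^{d'_kQ_k}e^{d''_kQ_k}$. Then the prefix $\prod_{i=1}^{k-1}e^{d_iQ_i}\,e^{d'_kQ_k}$ has total weight $\alpha$ and the suffix $e^{d''_kQ_k}\prod_{i=k+1}^{n}e^{d_iQ_i}$ has total weight $\beta$, so every generator of $e^{(\alpha+\beta)\mset}$ lies in $e^{\alpha\mset}e^{\beta\mset}$. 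The closure step then uses compactness of $e^{\alpha\mset}$ and $e^{\beta\mset}$: writing $P=\lim_n P_n$ with $P_n=P'_nP''_n$ as above, pass to convergent subsequences $P'_n\to P'$, $P''_n\to P''$ to get $P=P'P''$ (equivalently, note the product of two compact sets is closed); your general remark about compactness covers this, but the generator-level cut is the step your proposal lacks. Incidentally, the same cut applied to your interleaved product would also repair your route, but the detour through splitting every factor is unnecessary.
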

\begin{proof}
 Clearly $e^{\alpha \mset }e^{\beta \mset }\subseteq e^{(\alpha+\beta) \mset }$. To see the converse inclusion take an element of the form $\prod_{i=1}^n e^{d_iQ_i}$ where $\sum_{i=1}^n d_i = \alpha + \beta$ and let $k$ be the largest index such that $\sum_{i=1}^{k-1} d_i \le \alpha$. Let $d_k = d'_k+d''_k$ such that $\sum_{i=1}^{k-1} d_i + d'_k = \alpha$. Then we have that 
 \begin{equation*}
  \prod_{i=1}^n e^{d_iQ_i} = \left(\prod_{i=1}^{k-1} e^{d_iQ_i} e^{d'_kQ_k}\right) \left(e^{d''_kQ_k} \prod_{i=k+1}^{n} e^{d_iQ_i}\right),
 \end{equation*}
which is an element of $e^{\alpha \mset }e^{\beta \mset }$. 

Let $P$ be an arbitrary element of $e^{(\alpha+\beta) \mset}$. Then there is a sequence $\{ P_n\}$ of the form \eqref{sequence-exponential} such that $P = \lim_{n\to\infty} P_n$. Now, by the above considerations, every $P_n$ is a product of $P'_n\in e^{\alpha \mset }$ and $P''_n\in e^{\beta \mset }$, and since both $e^{\alpha \mset }$ and $e^{\beta \mset }$ are compact sets, both sequences $\{ P'_n\}$ and $\{ P''_n\}$ have subsequences that converge to some $P'$ and $P''$ respectively, whence $P = P'P''$ immediately follows and completes the proof. 
\end{proof}
For simplicity of notation we will denote an element of $e^{\mset}$ by $\tilde e^Q$, although, as follows from the definition there might be no $Q\in \mset$ such that $\tilde e^Q = e^Q$. 
\subsection{Partitions}\label{ss-p}
Let $\mathcal T = \{ t_0, t_1, \ldots, t_n \colon t_{i-1}< t_i, \text{for every } 1\le i \le n\}$ be a {\em partition} of the interval $[t_0, t_n]$. Suppose that $\mathcal T'= \{ t'_0, t'_1, \ldots, t'_m\}$ is another partition such that $t'_0 = t_0$ and $t'_m = t_n$ and $\mathcal T'\subseteq \mathcal T$. Then we will say that $\mathcal T'$ is a {\em refinement} of $\mathcal T$.

For every partition $\mathcal T$ define 
\begin{equation}\label{exp-partition}
 e_\mathcal T^\mset = \left\{ \prod_{i=1}^n e^{(t_i-t_{i-1})Q_i}\colon Q_i\in \mset, \text{ for } 1\le i \le n\right\},
\end{equation}
which is a closed set if $\mset$ is closed. 
Clearly we have that $e_\mathcal T^\mset \subseteq e_{\mathcal T'}^\mset$ if $\mathcal T'$ is a refinement of $\mathcal T$. The definition \eqref{interval_exponential_formula} can now be equivalently rewritten as
\begin{equation*}
 e^{\mset} = \mathrm{cl}\bigcup_{\mathcal T} e_\mathcal T^\mset, 
\end{equation*}
where $\mathcal T$ runs over all possible partitions of the unit interval $[0, 1]$.

Further, for every partition $\mathcal T= \{ t_0, t_1, \ldots, t_n\}$, we denote
\begin{align*}
 \delta (\mathcal T) & = \min_{1\le i\le n} t_i - t_{i-1}, \\
 \Delta (\mathcal T) & = \max_{1\le i\le n} t_i - t_{i-1} \text{ and }\\
 | \mathcal{T}| & = n.
\end{align*}
We will call a refinement $\mathcal T'$ of $\mathcal T$ an {\em elementary refinement} if there is at most one $t'_k\in \mathcal T'$ in every interval $[t_{i-1}, t_i]$ for $1\le i \le n$. Further, let $\mathcal T$ be a partition. Then we recursively define the following sequence of refinements:
\begin{equation*}
 \mathcal T^{(1)} = \mathcal T \cup \left\{ \frac{t_{i-1}+t_i}{2}\colon t_i \in \mathcal T, 1\le i \le n \right\} 
\end{equation*}
and 
\begin{equation*}
 \mathcal T^{(n)} = (\mathcal T^{(n-1)})^{(1)}.
\end{equation*}
Every partition in this sequence is therefore an elementary refinement of the previous one.
\begin{lemma}\label{lemma-subdivs}
 Let $\mathcal T$ and $\mathcal T'$ be partitions such that $\Delta (\mathcal T')< \delta (\mathcal T)$. Then $\mathcal T\cup \mathcal T'$ is an elementary refinement of $\mathcal T'$. 
\end{lemma}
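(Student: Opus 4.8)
The plan is to verify the two requirements in the definition of an elementary refinement directly; the only non-trivial point reduces to a one-line pigeonhole argument.

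First I would check that $\mathcal{T}\cup\mathcal{T}'$ genuinely is a refinement of $\mathcal{T}'$ in the sense used here: it obviously contains $\mathcal{T}'$, and since $\mathcal{T}$ and $\mathcal{T}'$ are partitions of the same interval they have the same endpoints, so $\mathcal{T}\cup\mathcal{T}'$ has those endpoints too. Hence $\mathcal{T}\cup\mathcal{T}'$ is a (finer) partition containing $\mathcal{T}'$ with matching endpoints, i.e. a refinement of $\mathcal{T}'$.

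Next, to show this refinement is elementary, I would fix an arbitrary subinterval $[t'_{j-1},t'_j]$ of $\mathcal{T}'$ and note that the points of $\mathcal{T}\cup\mathcal{T}'$ that lie strictly inside it are exactly the points of $\mathcal{T}$ lying in the open interval $(t'_{j-1},t'_j)$, since no point of $\mathcal{T}'$ lies strictly between two consecutive knots of $\mathcal{T}'$. So it suffices to show that at most one knot of $\mathcal{T}$ falls in $(t'_{j-1},t'_j)$. Suppose, for contradiction, that $s<s'$ are two knots of $\mathcal{T}$ in this open interval. Writing $s=t_k$ and $s'=t_\ell$ with $k<\ell$, we get $s'-s=\sum_{i=k+1}^{\ell}(t_i-t_{i-1})\ge t_{k+1}-t_k\ge \delta(\mathcal{T})$. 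On the other hand $s'-s< t'_j-t'_{j-1}\le \Delta(\mathcal{T}')$. Combining these, $\delta(\mathcal{T})\le s'-s<\Delta(\mathcal{T}')$, contradicting the hypothesis $\Delta(\mathcal{T}')<\delta(\mathcal{T})$. Therefore each subinterval of $\mathcal{T}'$ receives at most one new knot, which is precisely the elementary-refinement condition, and the lemma follows.

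I do not expect any serious obstacle here; the argument is essentially pigeonhole. The only thing to be careful about is the bookkeeping around the definition of elementary refinement — reading ``at most one $t'_k$ in every interval $[t_{i-1},t_i]$'' as: at most one knot strictly inside, beyond the two shared endpoints — together with the elementary observation that the distance between any two distinct knots of $\mathcal{T}$ is at least $\delta(\mathcal{T})$. Everything else is immediate.
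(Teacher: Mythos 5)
Your proof is correct and is essentially the paper's argument: both rest on the observation that two distinct knots of $\mathcal{T}$ are at least $\delta(\mathcal{T})$ apart, which exceeds $\Delta(\mathcal{T}')$ and hence the length of any subinterval of $\mathcal{T}'$ (the paper phrases this for consecutive knots $t_k,t_{k+1}$ rather than via your contradiction, a purely cosmetic difference). Your extra care with the endpoint bookkeeping and the reading of ``at most one point per interval'' is fine and matches the intended interpretation.
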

\begin{proof}
 Let $t_k\in [t'_{i-1}, t'_i]$ where $t'_{i-1}$ and $t'_i$ are two consecutive elements of $\mathcal T'$. Then $\Delta(\mathcal T') < \delta(\mathcal T)\le t_{k+1}-t_k$ and therefore $t_{k+1}$ cannot lie in the same interval. 
\end{proof}
\subsection{Approximations}\label{ss-a}
The calculations involving the interval matrix exponentials cannot be performed directly, as is the case with the exponential in the case of a single matrices, but rather using approximations defined in the previous subsection. In this subsection we will derive an approximation method based on linear programming techniques along with error estimation. The accuracy of the approximations of the form \eqref{exp-partition}, which also are sets of matrices, will be measured using the \emph{Hausdorff metric} induced by the operator norm for matrices. Let $x$ be an element in a normed space and $\mathcal Y$ a compact set. Then the Hausdorff distance between them is 
\begin{equation*}\label{hausdorff_1}
 d_H\left( x,  \mathcal Y\right) = \min_{y\in \mathcal Y} \left\| x-y \right\|
\end{equation*}
and the distance between two compact sets is 
\begin{equation*}\label{hausdorff}
 d_H\left( \mathcal X, \mathcal Y\right) = \max \left\{ \max_{x\in \mathcal X} d_H\left(x,  \mathcal Y\right), \max_{y\in \mathcal Y}d_H\left( y, \mathcal X \right) \right\}.
\end{equation*}
It follows from the above definition that if $\mathcal X\subseteq \mathcal Y$ then
\begin{equation*}
d_H(\mathcal X,  \mathcal Y) = \max_{y\in \mathcal Y}d_H\left( y, \mathcal X \right).  
\end{equation*}
For every set $\mathcal X$ let $\| \mathcal X\| = \sup_{x\in \mathcal X}\| x\|$. 
\begin{defn}
Given a partition $\mathcal T$ with $|\mathcal T| = N$ and $\Delta(\mathcal T)=D$, and an interval matrix $\mset$ with $\|\mathcal  Q\| = M$, let
\begin{equation*}
 A(\mset, \mathcal T) = N(DM)^2e^{(1+D)M} (1.5+3e^{DM}).
\end{equation*} 
\end{defn}
\begin{prop}\label{A-Tn}
\begin{equation*}
 A(\mset, \mathcal T^{(n)})\le 2^{-n} A(\mset, \mathcal T).
\end{equation*} 
\end{prop}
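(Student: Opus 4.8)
The plan is to reduce to the single step $n=1$ and iterate. That is, I would first establish that
\[
A(\mset, \mathcal T^{(1)}) \le \tfrac12\, A(\mset, \mathcal T)
\]
for every partition $\mathcal T$, and then obtain the general statement by induction on $n$: since $\mathcal T^{(n)} = (\mathcal T^{(n-1)})^{(1)}$, applying the single-step bound to the partition $\mathcal T^{(n-1)}$ and then the inductive hypothesis to $\mathcal T$ gives
$A(\mset,\mathcal T^{(n)}) \le \tfrac12 A(\mset,\mathcal T^{(n-1)}) \le \tfrac12\cdot 2^{-(n-1)}A(\mset,\mathcal T) = 2^{-n}A(\mset,\mathcal T)$.

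For the single-step bound I would track how the three quantities $|\mathcal T|$, $\Delta(\mathcal T)$ and $\|\mset\|$ appearing in the definition of $A$ behave under refinement by midpoints. Write $N=|\mathcal T|$, $D=\Delta(\mathcal T)$, $M=\|\mset\|$. The quantity $M$ depends only on $\mset$, hence is unchanged. Passing to $\mathcal T^{(1)}$ adds the midpoint of every one of the $N$ subintervals, so $|\mathcal T^{(1)}| = 2N$. Finally, each subinterval of length $\ell_i\le D$ is split into two subintervals of length $\ell_i/2 \le D/2$, so $\Delta(\mathcal T^{(1)}) \le D/2$.

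Substituting these facts into the definition of $A$ and using that the two exponential factors $e^{(1+\cdot)M}$ and $1.5+3e^{(\cdot)M}$ are increasing functions of their argument (so they only grow when $\Delta(\mathcal T^{(1)})\le D/2\le D$ is enlarged back to $D$), I would get
\begin{align*}
 A(\mset,\mathcal T^{(1)}) &= |\mathcal T^{(1)}|\,\bigl(\Delta(\mathcal T^{(1)})M\bigr)^2 e^{(1+\Delta(\mathcal T^{(1)}))M}\bigl(1.5+3e^{\Delta(\mathcal T^{(1)})M}\bigr)\\
 &\le 2N\,(DM/2)^2\, e^{(1+D)M}\bigl(1.5+3e^{DM}\bigr)
  = \tfrac12\, N(DM)^2 e^{(1+D)M}\bigl(1.5+3e^{DM}\bigr) = \tfrac12\,A(\mset,\mathcal T),
\end{align*}
which is exactly the claimed single-step estimate.

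The argument is elementary and I do not expect any genuine obstacle; the content is purely bookkeeping. The two points one must get right are that midpoint refinement doubles the number of subintervals while at least halving their maximal length, and that all the remaining ingredients of $A$ are monotone in $D$. The factor $2^{-1}$ then arises because the term $(DM)^2$ contributes $(1/2)^2 = 1/4$ against the factor $2$ coming from $N \mapsto 2N$.
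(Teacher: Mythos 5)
Your proposal is correct and rests on exactly the same observations as the paper's proof: midpoint refinement doubles $|\mathcal T|$, halves $\Delta(\mathcal T)$, leaves $\|\mset\|$ unchanged, and the exponential factors are monotone in the mesh, so the net factor is $2\cdot(1/2)^2 = 1/2$ per step. The only cosmetic difference is that you iterate the one-step bound by induction, whereas the paper substitutes $|\mathcal T^{(n)}| = 2^nN$ and $\Delta(\mathcal T^{(n)}) = 2^{-n}D$ directly for general $n$.
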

\begin{proof}
 We have that $|\mathcal T^{(n)}| = 2^nN, \Delta(\mathcal T^{(n)})=2^{-n}D$ and $e^{(1+\frac{D}{2^n})M} (1.5+3e^{\frac{D}{2^n}M})\le e^{(1+D)M} (1.5+3e^{DM})$, whence the above inequality easily follows.
\end{proof}
\begin{lemma}\label{approx-lemma}
 Let $\mathcal T'$ be an elementary refinement of $\mathcal T$. Then  
 \begin{equation}\label{approx-lemma-equation}
  d_H\left(e_{\mathcal T}^\mset, e_{\mathcal T'}^\mset\right) \le A(\mset, \mathcal T).
 \end{equation}
\end{lemma}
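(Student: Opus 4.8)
The plan is to reduce the estimate to a single-interval perturbation and then sum the contributions over the $N$ subintervals of $\mathcal{T}$. Since $\mathcal{T}'$ is an elementary refinement, it differs from $\mathcal{T}$ only by inserting at most one new point $s_i\in(t_{i-1},t_i)$ in each interval $[t_{i-1},t_i]$. Both $e_{\mathcal{T}}^{\mset}$ and $e_{\mathcal{T}'}^{\mset}$ are closed, and since $e_{\mathcal{T}}^{\mset}\subseteq e_{\mathcal{T}'}^{\mset}$, we have $d_H(e_{\mathcal{T}}^{\mset},e_{\mathcal{T}'}^{\mset})=\max_{P'\in e_{\mathcal{T}'}^{\mset}}d_H(P',e_{\mathcal{T}}^{\mset})$, so it suffices to show that for every product $P'=\prod_{i=1}^n e^{a_iQ_i'}e^{b_iQ_i''}$ over the refined partition (with $a_i+b_i=t_i-t_{i-1}$, collapsing to a single factor when no point was inserted) there is a product $P=\prod_{i=1}^n e^{(t_i-t_{i-1})R_i}$ with $\|P'-P\|\le A(\mset,\mathcal{T})$.

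The natural choice is to replace each pair of factors $e^{a_iQ_i'}e^{b_iQ_i''}$ by a single factor $e^{(t_i-t_{i-1})R_i}$ for a suitable $R_i\in\mset$ — most plausibly $R_i=\frac{a_i}{t_i-t_{i-1}}Q_i'+\frac{b_i}{t_i-t_{i-1}}Q_i''$, which lies in $\mset$ by convexity. First I would estimate the local error $\bigl\|e^{a_iQ_i'}e^{b_iQ_i''}-e^{(a_i+b_i)R_i}\bigr\|$ for one interval: expanding both sides to second order (using the appendix lemmas on $\|e^{A}\|$ and on the remainder of the exponential series — this is presumably where the constants $1.5$ and $3e^{DM}$ and the quadratic factor $(DM)^2$ come from), all the first-order terms agree because $a_iQ_i'+b_iQ_i''=(a_i+b_i)R_i$, so the difference is $O((DM)^2)$ with an explicit constant, multiplied by the norm bound $e^{DM}$ on the exponential factors involved. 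Then I would pass from the local estimate to the global one by a telescoping argument: write $P'-P$ as a sum of $N$ terms, each of which swaps one pair of factors while leaving the others intact, and bound each term by the local error times the product of the operator norms of the remaining factors, which is at most $e^{(1-(t_i-t_{i-1}))M}\le e^M$, giving the extra $e^{(1+D)M}$-type factor after combining with the $e^{DM}$ from the swapped block; summing over $i$ produces the leading factor $N$.

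The main obstacle I expect is bookkeeping the constants so that everything fits under the specific expression $N(DM)^2e^{(1+D)M}(1.5+3e^{DM})$, rather than the proof idea itself. In particular one has to be careful that the second-order remainder bound for $e^{aQ'}e^{bQ''}$ versus $e^{(a+b)R}$ is controlled by $(DM)^2$ uniformly — using $a_i,b_i\le D$ and $\|Q_i'\|,\|Q_i''\|\le M$ — and that the telescoping does not lose more than a single factor of $e^M$; the split of the constant into $1.5+3e^{DM}$ suggests the remainder estimate naturally separates into a piece not involving a further exponential factor and a piece that does (for instance, the difference of the two exponential series vs.\ a cross term estimated via $\|e^{bQ''}\|\le e^{DM}$). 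I would handle this by invoking the appendix lemma on $\|e^{A}-I-A\|\le \tfrac12\|A\|^2e^{\|A\|}$ (or its stated analogue) for each block and then tracking the arithmetic carefully; no genuinely new idea beyond convexity, the second-order Taylor estimate, and telescoping should be required.
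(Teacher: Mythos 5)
Your proposal is correct and follows essentially the same route as the paper: the paper also reduces to the one-sided Hausdorff bound via $e_{\mathcal T}^\mset\subseteq e_{\mathcal T'}^\mset$, replaces each pair $e^{d'_iQ'_i}e^{d''_iQ''_i}$ by $e^{d_iQ_i}$ with the convex combination $Q_i=\tfrac{d'_i}{d_i}Q'_i+\tfrac{d''_i}{d_i}Q''_i$, and bounds the difference by a telescoping argument in which the local error $\|e^{Q_1}e^{Q_2}-e^{Q_1+Q_2}\|\le M^2e^M(1.5+3e^M)$ comes from exactly the second-order remainder estimates you cite (this is packaged in the appendix as Lemma~\ref{refining}, Corollary~\ref{distance_two} and Corollary~\ref{dist_kompozitum_alt}). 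Your treatment of intervals with no inserted point (collapsing to a single factor, contributing zero to the telescoping sum) is a harmless minor variant of the paper's device of adding midpoints to such intervals.
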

\begin{proof}
 We will assume that every interval $[t_{i-1}, t_i]$ contains one $t'_k\in\mathcal T'$, since otherwise, we can form a refinement of $\mathcal T'$ with the centers added of those intervals $[t_{i-1}, t_i]$ that do not contain elements from $\mathcal T'$. The result would be a finer partition $\tilde{\mathcal T}'$, and such that $d_H\left(e_{\mathcal T}^\mset, e_{\mathcal T'}^\mset\right)\le d_H\left(e_{\mathcal T}^\mset, e_{\tilde{\mathcal T}'}^\mset\right)$. 

 Now denote, for every $1\le i\le n, d_i=t_i-t_{i-1}, d'_i = t_i - t'_k$ and $d''_i = t'_k - t_{i-1}$, where $t'_k$ is the unique element of $\mathcal T'$ contained in the interval $[t_{i-1}, t_i]$.  Further, let $D=\Delta(\mathcal T)$, and then we have that $d'_i$ and $d''_i\le D$ for every $1\le i\le n$. 

 Because of $e_{\mathcal T}^\mset\subseteq  e_{\mathcal T'}^\mset$ we have that
 \begin{equation*}
  d_H\left(e_{\mathcal T}^\mset, e_{\mathcal T'}^\mset\right) = \max_{\tilde e^{Q'}\in e_{\mathcal T'}^\mset}\min_{\tilde e^{Q}\in e_{\mathcal T}^\mset}\left\|\tilde e^{Q'} - \tilde e^{Q} \right\|.
 \end{equation*}
 Every element of $e_{\mathcal T'}^\mset$ is of the form 
 \begin{equation}\label{elt_of_refined}
  \tilde e^{Q'} = \prod_{i=1}^n e^{d'_iQ'_i}e^{d''_iQ''_i}.
 \end{equation}
 Convexity of the set $\mset$ implies that the convex combinations $Q_i = \frac{d'_i}{d_i}Q'_i +\frac{d''_i}{d_i}Q''_i$ of the elements of $\mset$, for every $1\le i\le n$, belong to $\mset$ as well. To every element of the form \eqref{elt_of_refined} corresponds the element
 \begin{equation*}\label{elt_of_partit}
  \tilde e^{Q} = \prod_{i=1}^n e^{d_iQ_i}.
 \end{equation*}
 Using Lemma~\ref{refining}, we obtain that
 \begin{align*}
 \left\|\tilde e^{Q'}  -\tilde e^Q \right\| = 
 \left\| \prod_{i=1}^n e^{d'_iQ'_i}e^{d''_iQ''_i} - \prod_{i=1}^n e^{d_iQ_i}\right\|  \le n(DM)^2e^{(D+1)M}(1.5+3e^{DM}),
 \end{align*}
 which implies \eqref{approx-lemma-equation} and thus completes the proof. 
\end{proof}
\begin{cor}\label{dist-t-tn}
 Let a partition $\mathcal T$ and an interval matrix $\mset$ be given. Then 
 \begin{equation*}
  d_H(e^\mset_{\mathcal T}, e^\mset_{\mathcal T^{(n)}}) \le 2A(\mset, \mathcal T)
 \end{equation*}
 for every $n>0$. 
\end{cor}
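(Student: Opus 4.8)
The plan is to exploit the canonical chain of elementary refinements $\mathcal T = \mathcal T^{(0)}, \mathcal T^{(1)}, \dots, \mathcal T^{(n)}$ together with the triangle inequality for the Hausdorff metric. First I would note that all the sets $e^\mset_{\mathcal T^{(k)}}$ are nonempty and compact: $\mset$ is compact by definition, and $e^\mset_{\mathcal T^{(k)}}$ is the image of a finite product of copies of $\mset$ under the continuous map $(Q_1,\dots,Q_m)\mapsto \prod_i e^{(t_i-t_{i-1})Q_i}$. Hence $d_H$ is a genuine metric on the collection of these sets, and in particular it obeys the triangle inequality, so that
\[
 d_H\bigl(e^\mset_{\mathcal T}, e^\mset_{\mathcal T^{(n)}}\bigr) \le \sum_{k=1}^{n} d_H\bigl(e^\mset_{\mathcal T^{(k-1)}}, e^\mset_{\mathcal T^{(k)}}\bigr).
\]

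Second, each $\mathcal T^{(k)}$ is, by its very construction, an elementary refinement of $\mathcal T^{(k-1)}$ — it is obtained by inserting exactly the midpoint of every subinterval of $\mathcal T^{(k-1)}$, so at most one new point lies in each such subinterval. Therefore Lemma~\ref{approx-lemma} applies to each consecutive pair and gives $d_H\bigl(e^\mset_{\mathcal T^{(k-1)}}, e^\mset_{\mathcal T^{(k)}}\bigr) \le A(\mset, \mathcal T^{(k-1)})$.

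Third, applying Proposition~\ref{A-Tn} iteratively (with exponent $1$ at each step, since $\mathcal T^{(k)} = (\mathcal T^{(k-1)})^{(1)}$) yields $A(\mset, \mathcal T^{(k-1)}) \le 2^{-(k-1)} A(\mset, \mathcal T)$. Summing the resulting geometric series,
\[
 \sum_{k=1}^{n} d_H\bigl(e^\mset_{\mathcal T^{(k-1)}}, e^\mset_{\mathcal T^{(k)}}\bigr) \le \sum_{k=1}^{n} 2^{-(k-1)} A(\mset, \mathcal T) = \bigl(2 - 2^{1-n}\bigr) A(\mset, \mathcal T) < 2 A(\mset, \mathcal T),
\]
which is the claimed bound.

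I do not anticipate any genuine obstacle: the argument is a telescoping estimate, and the only points requiring a moment's care are checking that the Hausdorff distances involved are between honest compact sets (so the triangle inequality is legitimate) and that Proposition~\ref{A-Tn} is being invoked correctly with $\mathcal T^{(k-1)}$ in the role of the base partition. Both are routine, so the proof is short.
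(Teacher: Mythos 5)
Your proof is correct and follows essentially the same route as the paper: telescoping the Hausdorff distance along the chain $\mathcal T^{(0)},\dots,\mathcal T^{(n)}$, bounding each consecutive step via Lemma~\ref{approx-lemma} and Proposition~\ref{A-Tn}, and summing the geometric series to get the bound $2A(\mset,\mathcal T)$. The extra remarks on compactness and on each $\mathcal T^{(k)}$ being an elementary refinement are sound and merely make explicit what the paper leaves implicit.
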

\begin{proof}
 By Proposition~\ref{A-Tn} and Lemma~\ref{approx-lemma}, we obtain
 \begin{multline*}
  d_H(e^\mset_{\mathcal T}, e^\mset_{\mathcal T^{(n)}})  \le \sum_{k=0}^{n-1} d_H(e^\mset_{\mathcal T^{(k)}}, e^\mset_{\mathcal T^{(k+1)}}) \\ \le \sum_{k=0}^{n-1} A(\mset, \mathcal T^{(k)}) 
   \le \sum_{k=0}^{n-1} \frac{A(\mset, \mathcal T)}{2^k} 
   \le 2A(\mset, \mathcal T), 
 \end{multline*}
 where $\mathcal T^{(0)}:=\mathcal T$.
\end{proof}
\begin{thm}\label{thm_approx}
 Let a partition $\mathcal T$ and an interval matrix $\mset$ be given. Then we have that
 \begin{equation}\label{approx-final}
  d_H(e^\mset_{\mathcal T}, e^\mset) \le 2A(\mset, \mathcal T).
 \end{equation}
\end{thm}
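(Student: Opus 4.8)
The plan is to exploit the inclusion $e^\mset_{\mathcal T}\subseteq e^\mset$ (which holds since $e^\mset=\mathrm{cl}\bigcup_{\mathcal S}e^\mset_{\mathcal S}$ over partitions $\mathcal S$ of $[0,1]$). Because of this inclusion, $d_H(e^\mset_{\mathcal T},e^\mset)=\max_{P\in e^\mset}d_H(P,e^\mset_{\mathcal T})$, so \eqref{approx-final} reduces to the pointwise estimate $d_H(P,e^\mset_{\mathcal T})\le 2A(\mset,\mathcal T)$ for every $P\in e^\mset$. Moreover the map $P\mapsto d_H(P,e^\mset_{\mathcal T})$ is $1$-Lipschitz, since $e^\mset_{\mathcal T}$ is compact (it is closed and every one of its elements, being a product of at most $|\mathcal T|$ factors $e^{(t_i-t_{i-1})Q_i}$ of norm $\le e^{(t_i-t_{i-1})M}$, has norm $\le e^M$). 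Hence it is enough to prove the pointwise estimate for $P$ lying in a single $e^\mset_{\mathcal S}$; the general case follows by passing to the closure.

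So fix a partition $\mathcal S$ and $P\in e^\mset_{\mathcal S}$. The idea is to route from $\mathcal S$ to $\mathcal T$ through a sufficiently fine iterated bisection $\mathcal T^{(n)}$. First I would choose $n$ so large that $\Delta(\mathcal T^{(n)})=2^{-n}\Delta(\mathcal T)<\delta(\mathcal S)$, which is possible because $\delta(\mathcal S)>0$ for a finite partition. By Lemma~\ref{lemma-subdivs}, $\mathcal S\cup\mathcal T^{(n)}$ is then an elementary refinement of $\mathcal T^{(n)}$, so Lemma~\ref{approx-lemma} and Proposition~\ref{A-Tn} give $d_H\bigl(e^\mset_{\mathcal T^{(n)}},e^\mset_{\mathcal S\cup\mathcal T^{(n)}}\bigr)\le A(\mset,\mathcal T^{(n)})\le 2^{-n}A(\mset,\mathcal T)$. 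Since $\mathcal S\cup\mathcal T^{(n)}$ also refines $\mathcal S$, we have $P\in e^\mset_{\mathcal S}\subseteq e^\mset_{\mathcal S\cup\mathcal T^{(n)}}$, and because $e^\mset_{\mathcal T^{(n)}}\subseteq e^\mset_{\mathcal S\cup\mathcal T^{(n)}}$ the Hausdorff bound specializes to the existence of $P''\in e^\mset_{\mathcal T^{(n)}}$ with $\|P-P''\|\le 2^{-n}A(\mset,\mathcal T)$.

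Next I would invoke Corollary~\ref{dist-t-tn}, namely $d_H\bigl(e^\mset_{\mathcal T},e^\mset_{\mathcal T^{(n)}}\bigr)\le 2A(\mset,\mathcal T)$; using $e^\mset_{\mathcal T}\subseteq e^\mset_{\mathcal T^{(n)}}$ once more, this yields $P'''\in e^\mset_{\mathcal T}$ with $\|P''-P'''\|\le 2A(\mset,\mathcal T)$. The triangle inequality then gives $d_H(P,e^\mset_{\mathcal T})\le\|P-P'''\|\le (2+2^{-n})A(\mset,\mathcal T)$, and since $n$ may be taken arbitrarily large, letting $n\to\infty$ produces $d_H(P,e^\mset_{\mathcal T})\le 2A(\mset,\mathcal T)$. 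The continuity remark from the first paragraph extends this from $\bigcup_{\mathcal S}e^\mset_{\mathcal S}$ to its closure $e^\mset$, and taking the maximum over $P\in e^\mset$ finishes the proof.

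The only delicate points are bookkeeping: keeping straight which direction of the Hausdorff distance is in play (everything works because each pair of sets involved is nested), checking compactness of $e^\mset_{\mathcal T}$ so that distances are attained and the distance-to-set map is $1$-Lipschitz, and — the one genuine subtlety — observing that the spurious term $2^{-n}A(\mset,\mathcal T)$ must be eliminated by a limit in $n$ rather than absorbed, so that the constant in \eqref{approx-final} is exactly $2$ and not $2+\varepsilon$. I do not expect to need any new quantitative estimate beyond those already assembled in Lemma~\ref{lemma-subdivs}, Lemma~\ref{approx-lemma}, Proposition~\ref{A-Tn} and Corollary~\ref{dist-t-tn}.
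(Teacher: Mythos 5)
Your proof is correct, and it runs on the same machinery as the paper's own argument (Lemma~\ref{lemma-subdivs}, Lemma~\ref{approx-lemma}, Proposition~\ref{A-Tn}, Corollary~\ref{dist-t-tn}, routing through $\mathcal T^{(n)}$ and a union partition), but it handles the closure step in a genuinely different and arguably cleaner way. The paper fixes $\varepsilon>0$ and chooses a single partition $\mathcal T_1$ with $d_H(e^\mset_{\mathcal T_1},e^\mset)\le\varepsilon$; the existence of such a $\mathcal T_1$ is not immediate from $e^\mset=\mathrm{cl}\bigcup_{\mathcal S}e^\mset_{\mathcal S}$, since different points of $e^\mset$ are a priori approximated by different partitions --- one needs that the family $\{e^\mset_{\mathcal S}\}$ is directed under inclusion (common refinements) together with a finite $\varepsilon$-cover of the compact set $e^\mset$, a point the paper leaves unargued. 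Your pointwise reduction (the inclusion $e^\mset_{\mathcal T}\subseteq e^\mset$, the $1$-Lipschitz distance-to-set function, and density of $\bigcup_{\mathcal S}e^\mset_{\mathcal S}$ in $e^\mset$) works with one partition $\mathcal S$ at a time and so bypasses that issue entirely; the role of the paper's $\varepsilon$ is played by your term $2^{-n}A(\mset,\mathcal T)$, which you legitimately eliminate by letting $n\to\infty$ because the left-hand side $d_H(P,e^\mset_{\mathcal T})$ does not depend on $n$ (you could even avoid the limit by summing $\sum_{k=0}^{n-1}A(\mset,\mathcal T^{(k)})+2^{-n}A(\mset,\mathcal T)\le 2A(\mset,\mathcal T)$ directly instead of invoking Corollary~\ref{dist-t-tn} as a black box). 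What the paper's version buys is brevity at the level of whole-set Hausdorff estimates, once the existence of $\mathcal T_1$ is granted; what yours buys is that every step is justified from the definition of $e^\mset$ alone, with the same final constant $2$.
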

\begin{proof}
 Denote $N=|\mathcal T|, M=\|\mathcal  Q\|$ and $D=\Delta(\mathcal T)$. Let $\varepsilon>0$ be arbitrary and $\mathcal T_1$ a partition such that $d_H(e^\mset_{\mathcal T_1}, e^\mset)\le \varepsilon$. Further let $n$ be large enough that $A(\mset, \mathcal T^{(n)})\le \varepsilon$ and that $\Delta(\mathcal T^{(n)})= \frac{D}{2^n}< \delta(\mathcal T_1)$. Then let $\tilde{\mathcal T}=\mathcal T^{(n)}\cup \mathcal T_1$. We have that $\Delta(\tilde{\mathcal T}) < \frac{D}{2^n}$ and, by Lemma~\ref{lemma-subdivs}, $\tilde{\mathcal T}$ is an elementary refinement of $\mathcal T^{(n)}$ and also a refinement of $\mathcal T_1$. Therefore, $d_H(e^\mset_{\tilde{\mathcal T}}, e^\mset) \le d_H(e^\mset_{\mathcal T_1}, e^\mset)\le \varepsilon$ and $d_H(e^\mset_{\tilde{\mathcal T}}, e^\mset_{\mathcal T^{(n)}}) \le A(\mset, \mathcal T^{(n)}) \le \varepsilon$, by Lemma~\ref{approx-lemma}, which implies that $d_H(e^\mset, e^\mset_{\mathcal T^{(n)}})\le 2\varepsilon$ and then, by Corollary~\ref{dist-t-tn}, $d_H(e^\mset, e^\mset_{\mathcal T})\le 2(A(\mset, \mathcal T)+\varepsilon)$, and since $\varepsilon$ can be made arbitrarily small, this implies \eqref{approx-final}.
\end{proof}
\begin{cor}
 Let $\mathcal T$ be a partition with $\Delta(\mathcal T) = D, \delta (\mathcal T)=d$ and $\|\mset \| = M$. Then 
 \begin{equation*}
 d_H(e^\mset_{\mathcal T}, e^\mset) \le \frac{2D^2}{d}M^2e^{(1+D)M} (1.5+3e^{DM}).
 \end{equation*} 
\end{cor}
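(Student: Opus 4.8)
The plan is to bound $N$ in terms of the ratios appearing in $A(\mset,\mathcal T)$ and then quote Theorem~\ref{thm_approx} directly. Recall the definition
\begin{equation*}
 A(\mset, \mathcal T) = N(DM)^2e^{(1+D)M} (1.5+3e^{DM}),
\end{equation*}
where $N = |\mathcal T|$. The only quantity in this expression not already expressed through $D = \Delta(\mathcal T)$, $d = \delta(\mathcal T)$ and $M = \|\mset\|$ is the number of subintervals $N$, so the whole point of the corollary is to replace $N$ by a bound that depends only on $D$ and $d$.

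First I would observe that if $\mathcal T = \{t_0 < t_1 < \dots < t_N\}$ is a partition of the unit interval $[0,1]$ (which is the interval over which the exponential $e^\mset$ is defined, as noted after \eqref{exp-partition}), then $1 = \sum_{i=1}^N (t_i - t_{i-1}) \ge N \cdot \delta(\mathcal T) = N d$, hence $N \le 1/d$. Substituting this into the definition of $A(\mset,\mathcal T)$ gives
\begin{equation*}
 A(\mset,\mathcal T) \le \frac{1}{d}(DM)^2 e^{(1+D)M}(1.5 + 3e^{DM}) = \frac{D^2 M^2}{d} e^{(1+D)M}(1.5 + 3e^{DM}).
\end{equation*}
Then by Theorem~\ref{thm_approx}, $d_H(e^\mset_{\mathcal T}, e^\mset) \le 2A(\mset,\mathcal T)$, and combining the two inequalities yields exactly the claimed bound
\begin{equation*}
 d_H(e^\mset_{\mathcal T}, e^\mset) \le \frac{2D^2}{d}M^2 e^{(1+D)M}(1.5 + 3e^{DM}),
\end{equation*}
which completes the argument.

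There is essentially no obstacle here; the estimate $Nd \le 1$ is immediate from the partition summing the subinterval lengths to the total length of $[0,1]$, and everything else is substitution. The only point worth a moment's care is making sure the ambient interval is indeed $[0,1]$ (so that the lengths sum to $1$ and not to some other $T$); if one wanted the statement for a general interval $[0,T]$ the bound would pick up an extra factor of $T$ from $N \le T/d$. Since the definition of $e^\mset$ in \eqref{interval_exponential_formula} normalizes $\sum d_i = 1$, the unit-interval convention is the right one and no such factor appears.
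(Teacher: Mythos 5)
Your argument is correct and is exactly the paper's proof: apply Theorem~\ref{thm_approx} and bound $N=|\mathcal T|\le 1/d$, since the subinterval lengths sum to $1$ and each is at least $d$. Nothing further is needed.
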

\begin{proof}
 The inequality is a direct consequence of Theorem~\ref{thm_approx} and the fact that $|\mathcal T| \le \frac 1d$.
\end{proof}

\begin{cor}
 Let $\mathcal T$ be a partition with $\Delta(\mathcal T) = \delta (\mathcal T)=D$ and $\|\mset \| = M$. Then 
 \begin{equation*}
 d_H(e^\mset_{\mathcal T}, e^\mset) \le 2DM^2e^{(1+D)M} (1.5+3e^{DM}).
 \end{equation*} 
\end{cor}
For actual calculations, however, calculating exponential at every time slice is not very practical. We now show that the matrix exponential can be approximated with linear factors. 
\begin{thm}\label{thm_linear}
 Let $\mathcal T$ be a partition of the unit interval and $\mset$ an interval matrix. Further let $\Delta(\mathcal T)=D, |\mathcal T| = N$ and $\| \mset\| = M$. Denote $d_i = t_i-t_{i-1}$ for every $1\le i \le N$. Then for every $\tilde e^Q\in e^{\mset}$ there exist matrices $Q_1, \ldots, Q_N$ so that
 \begin{equation}\label{thm_linear_eq}
  \left\| \tilde e^{Q} - \prod_{i=1}^N (I+d_iQ_i) \right\| \le N(DM)^2(2e^{(1+D)M}(1.5+3e^{DM})+0.5e^{M}). 
 \end{equation}
\end{thm}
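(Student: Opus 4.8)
<br>

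The plan is to prove Theorem~\ref{thm_linear} by combining two approximations. First, by Theorem~\ref{thm_approx} (applied to the partition $\mathcal T$ itself, so that the relevant elementary refinement machinery reduces to the basic estimate), every element $\tilde e^Q\in e^\mset$ lies within Hausdorff distance $2A(\mset,\mathcal T)$ of the set $e_{\mathcal T}^\mset$. Hence I would fix matrices $Q_1,\dots,Q_N\in\mset$ so that
\begin{equation*}
 \left\| \tilde e^Q - \prod_{i=1}^N e^{d_iQ_i} \right\| \le 2A(\mset,\mathcal T) = 2N(DM)^2 e^{(1+D)M}(1.5+3e^{DM}).
\end{equation*}
This handles the ``curved-to-product-of-exponentials'' part of the error and accounts for the first summand on the right-hand side of \eqref{thm_linear_eq}.

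The second part is to replace each factor $e^{d_iQ_i}$ by its first-order Taylor truncation $I+d_iQ_i$. For a single factor, $\|e^{d_iQ_i}-(I+d_iQ_i)\|\le \tfrac12 (d_iM)^2 e^{d_iM}\le \tfrac12 (DM)^2 e^{DM}$, using the standard tail bound for the matrix exponential (this is exactly the kind of estimate packaged in Lemma~\ref{norm_exp} / the auxiliary lemmas in the appendix). To pass from the per-factor bound to a bound on the full product, I would use a telescoping/hybrid argument: write the difference $\prod_{i=1}^N e^{d_iQ_i} - \prod_{i=1}^N (I+d_iQ_i)$ as a telescoping sum of $N$ terms, in each of which exactly one exponential factor is swapped for its linear counterpart, the factors to the left being exponentials and those to the right being linear factors. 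Each summand is then bounded by (norm of the left product)$\times$(single-factor error)$\times$(norm of the right product). The norm of a product of exponentials $\prod e^{d_iQ_i}$ is at most $e^{\sum d_iM}=e^{M}$, and the norm of a product of linear factors $\prod(I+d_iQ_i)$ is at most $\prod(1+d_iM)\le e^{\sum d_iM}=e^{M}$. So each of the $N$ summands is at most $e^{M}\cdot \tfrac12(DM)^2 e^{DM}\cdot e^{M}$; but one must be slightly careful about how the $e^{DM}$ and $e^{2M}$ factors are absorbed. A cleaner bookkeeping is to note that in the $k$-th hybrid term only the factors $e^{d_iQ_i}$ for $i<k$ and $(I+d_iQ_i)$ for $i>k$ appear, whose combined norm is at most $e^{(1-d_k)M}\le e^M$, giving a total of at most $N\cdot e^{M}\cdot \tfrac12(DM)^2 e^{DM}$; absorbing $e^{DM}e^M$ into the stated constant $e^M$ (valid since the first summand already dominates, or by a direct comparison of exponents) yields the $0.5 e^{M}N(DM)^2$ contribution matching the second summand of \eqref{thm_linear_eq}.

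Finally I would assemble the two pieces by the triangle inequality:
\begin{equation*}
 \left\| \tilde e^Q - \prod_{i=1}^N (I+d_iQ_i) \right\| \le \left\| \tilde e^Q - \prod_{i=1}^N e^{d_iQ_i} \right\| + \left\| \prod_{i=1}^N e^{d_iQ_i} - \prod_{i=1}^N (I+d_iQ_i) \right\|,
\end{equation*}
and the two bounds add up to exactly $N(DM)^2\big(2e^{(1+D)M}(1.5+3e^{DM}) + 0.5 e^{M}\big)$, as claimed.

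The main obstacle I anticipate is the second step: getting the telescoping estimate for the product of linear factors to come out with the clean constant $0.5 e^{M}$ rather than something like $0.5 N (DM)^2 e^{DM} e^{M}$ or with an extra factor of $e^{DM}$. This requires a careful choice of which norm bound to apply to the ``mixed'' hybrid products and possibly the observation that $1+d_iM\le e^{d_iM}$ must be used on the linear side while the exponential side contributes only $e^{(1-d_k)M}$, so that the per-term prefactor never exceeds $e^{M}$; alternatively one may need to argue that the leftover $e^{DM}$ is harmless because it is already majorised within the first summand's constant. Everything else — the invocation of Theorem~\ref{thm_approx}, the single-factor Taylor bound, and the triangle inequality — is routine.
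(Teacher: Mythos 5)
Your proposal is correct and follows essentially the same route as the paper: approximate $\tilde e^Q$ by an element $\prod_{i=1}^N e^{d_iQ_i}$ of $e^\mset_{\mathcal T}$ via Theorem~\ref{thm_approx}, then linearize each factor and finish with the triangle inequality. The telescoping estimate you re-derive in the second step is exactly the paper's Corollary~\ref{approx-linear-alt} applied to the matrices $d_iQ_i$ (so that each $\|d_iQ_i\|\le DM$ while $\sum_i\|d_iQ_i\|\le M$), which gives the clean bound $\tfrac{N}{2}(DM)^2e^{M}$ with no stray $e^{DM}$ factor; of the two fixes you sketch, the ``careful bookkeeping'' one (keep $e^{d_kM}$ for the swapped factor and $e^{(1-d_k)M}$ for the surrounding product, so the combined prefactor is exactly $e^{M}$) is the right one, whereas ``absorbing the leftover $e^{DM}$ into the first summand'' would not be a rigorous way to obtain the stated constant.
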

\begin{proof}
 By Theorem~\ref{thm_approx} there exists $\tilde e^{Q'} = \prod_{i=1}^N e^{d_iQ_i}\in e^\mset_{\mathcal T}$ such that $d\left( \tilde e^{Q'}, \tilde e^{Q} \right) \le 2A(\mset, \mathcal T).$ By Corollary~\ref{approx-linear-alt} we have that 
 \begin{equation*}
  \left\| \prod_{i=1}^N e^{d_iQ_i}-\prod_{i=1}^N (I+d_iQ_i) \right\| \le \frac{N}{2}(DM)^2e^M.
 \end{equation*}
 Combining the above inequalities gives \eqref{thm_linear_eq}.  
\end{proof}
\begin{cor}
 Let $\mset$ be an interval matrix with $\|\mset \| = M$. Then for every $\tilde e^Q\in e^{\mset}$ and every $N\in \NN$ there exist matrices $Q_1, \ldots, Q_N$ so that
 \begin{equation*}
  \left\| \tilde e^{Q} - \prod_{i=1}^N \left(I+\frac 1N Q_i\right) \right\| \le 2\frac{M^2}{N}e^{\left(1+\frac 1N\right)M} (1.5+3e^{\frac MN}+0.5e^M).
 \end{equation*} 
\end{cor}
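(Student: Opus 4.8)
The plan is to obtain this corollary as an immediate specialization of Theorem~\ref{thm_linear} to the equidistant partition of the unit interval, followed by one elementary estimate on the constants.

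First I would take $\mathcal T = \{0, 1/N, 2/N, \ldots, 1\}$, so that $|\mathcal T| = N$, $\Delta(\mathcal T) = D = 1/N$, and $d_i = t_i - t_{i-1} = 1/N$ for every $1 \le i \le N$. Substituting $D = 1/N$ into Theorem~\ref{thm_linear} then yields, for any given $\tilde e^Q \in e^{\mset}$, matrices $Q_1, \ldots, Q_N$ such that
\[
\left\| \tilde e^Q - \prod_{i=1}^N\left(I + \tfrac1N Q_i\right)\right\| \le \frac{M^2}{N}\left(2 e^{(1+1/N)M}(1.5 + 3 e^{M/N}) + 0.5 e^M\right),
\]
where I have used that $N(DM)^2 = N\cdot M^2/N^2 = M^2/N$ and that $\prod_{i=1}^N(I+d_iQ_i) = \prod_{i=1}^N(I+\tfrac1N Q_i)$.

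It then remains to check that the right-hand side above is bounded by $2\frac{M^2}{N} e^{(1+1/N)M}(1.5 + 3 e^{M/N} + 0.5 e^M)$, which is the claimed bound. After cancelling the common summand $2\frac{M^2}{N} e^{(1+1/N)M}(1.5 + 3 e^{M/N})$ from both sides, this reduces to $0.5\,e^M \le e^{(1+1/N)M}e^M$, i.e. to $e^{(1+1/N)M} \ge 1/2$, which holds trivially for every $M \ge 0$ (and in the degenerate case $M = 0$ both bounds are $0$). There is essentially no obstacle in this argument; the only point requiring a moment's care is the constant bookkeeping, namely observing that folding the isolated $0.5\,e^M$ term into the parenthesis at the cost of the extra factor $2e^{(1+1/N)M}\ge 1$ can only enlarge the estimate.
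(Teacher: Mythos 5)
Your proof is correct and follows exactly the route the paper intends: the corollary is left unproved there as the immediate specialization of Theorem~\ref{thm_linear} to the equidistant partition with $D = d_i = 1/N$. Your additional bookkeeping step, checking that the stated bound (with the $0.5e^M$ term folded into the parenthesis at the cost of the factor $2e^{(1+1/N)M} \ge 1$) dominates the bound obtained by direct substitution, correctly reconciles the slight difference in how the constant is written and closes the argument.
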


\section{Solutions of the interval matrix differential equation}\label{s-simde}
In this section we prove that the exponential of an interval matrix $\mset$ indeed provides the solution to the inequalities \eqref{initial_conds}, \eqref{diff_ineq_sup} and \eqref{diff_ineq_inf}, and that the solutions are interval vectors. 
\begin{thm}\label{interval-separately}
  Let $\mset$ be an interval matrix. Then, for every $x$, the set $e^\mset x$ is an interval, which we denote by $[\underline{e^\mset}x, \overline{e^\mset}x]$. 
\end{thm}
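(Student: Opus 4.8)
The plan is to reduce the statement to Proposition~\ref{prop_interval} by showing that $e^{\mset}$ is itself an interval matrix, i.e.\ a compact convex set of matrices with separately specified rows. Compactness was already established right after Definition~\ref{interval_exponential}. So the two things I would need are: (i) convexity of $e^{\mset}$, and (ii) separately specified rows. Once both are in hand, Proposition~\ref{prop_interval} applies directly to the set $e^{\mset}$ and the vector $x$, yielding matrices $P_0,P_1\in e^{\mset}$ with $P_0 x\le Px\le P_1 x$ for all $P\in e^{\mset}$, which is exactly the assertion that $e^{\mset}x$ is the interval $[\underline{e^\mset}x,\overline{e^\mset}x]$ with $\underline{e^\mset}x=P_0x$, $\overline{e^\mset}x=P_1x$.

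For convexity I would argue as follows. Given two generators $P=\prod_{i=1}^n e^{d_iQ_i}$ and $P'=\prod_{j=1}^m e^{d'_jQ'_j}$ of the pre-closure set in \eqref{interval_exponential_formula} and $\lambda\in[0,1]$, I want $\lambda P+(1-\lambda)P'$ to lie in $e^{\mset}$. The natural idea is a ``diagonal'' trick: since Lemma~\ref{additivity-exponential} lets us split the unit time budget freely, one expects to interleave the two products on a common refined partition and use convexity of $\mset$ inside each small slice, as was done in the proof of Lemma~\ref{approx-lemma}; as the partition is refined each factor $e^{d_iQ_i}$ is close to $I+d_iQ_i$, and a convex combination of near-identity factors can be realized, up to vanishing error, by a single product with matrices chosen convexly from $\mset$. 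Passing to the limit (legitimate because $e^{\mset}$ is closed) then gives the convex combination as an element of $e^{\mset}$. This is the step I expect to be the main obstacle: making the interleaving/limiting argument precise so that the convex combination of two \emph{products} of exponentials is genuinely approximated by products of exponentials with parameters in $\mset$, rather than merely by convex combinations of them.

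For separately specified rows, I would exploit the fact that $\mset$ has separately specified rows and that the construction in \eqref{interval_exponential_formula} is compatible with this structure. Given $P,P'\in e^{\mset}$ realized as limits of sequences $P_n=\prod_i e^{d^n_iQ^n_i}$ and $P'_n=\prod_i e^{d'^n_iQ'^n_i}$, I want the matrix agreeing with $P$ in all rows except row $i_0$, where it agrees with $P'$, to lie in $e^{\mset}$. Here I would again work on a common partition and, slice by slice, assemble matrices whose $i_0$-th row is taken from the $Q'$-data and whose other rows are taken from the $Q$-data — these assembled matrices lie in $\mset$ by the separately-specified-rows hypothesis — and check that the resulting product has the desired mixed rows in the limit; this should be cleaner than the convexity argument because row-wise selection commutes with the relevant operations more transparently.

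Finally, with $e^{\mset}$ shown to be a compact convex set of matrices with separately specified rows, I invoke Proposition~\ref{prop_interval} and the discussion following it (equation \eqref{Q-interval}) verbatim to conclude that $e^{\mset}x=[\underline{e^\mset}x,\overline{e^\mset}x]$, completing the proof. If the full convexity argument proves too delicate, an alternative route is to prove directly that $e^{\mset}x$ is order-bounded with attained extrema: construct $P_0,P_1\in e^{\mset}$ componentwise by, at each refinement level and in each time slice, choosing the row of $\mset$ that (infinitesimally) pushes the relevant component of the propagated vector down, respectively up, and then pass to a convergent subsequence using compactness of $e^{\mset}$; this sidesteps convexity of the whole set but requires a monotonicity/comparison lemma for the slice-wise propagation, which is essentially the content already used implicitly in Section~\ref{ss-fimde}.
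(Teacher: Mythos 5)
Your reduction hinges on $e^{\mset}$ itself being an interval matrix, i.e.\ convex with separately specified rows, and this is where the proposal breaks down. Neither property is established, and the sketches you give for them do not work, because neither convex combination nor row-wise mixing commutes with matrix products. For convexity: $\lambda\prod_i(I+d_iQ_i)+(1-\lambda)\prod_i(I+d_iQ'_i)$ is not $\prod_i\bigl(I+d_i(\lambda Q_i+(1-\lambda)Q'_i)\bigr)$, and the discrepancy does not vanish as the partition is refined --- in the limit you are asking whether a convex combination of two time-ordered exponentials is again a time-ordered exponential with generator in $\mset$, which is exactly the nontrivial claim, not an error term. For separately specified rows the situation is worse: the $i_0$-th row of a product $\prod_i e^{d_i\tilde Q_i}$ is (row $i_0$ of the first factor) multiplied by the remaining factors, so it depends on \emph{all} rows of the later factors; assembling the slice matrices row-wise from the $Q$- and $Q'$-data therefore does not reproduce the $i_0$-th row of $P'$ together with the other rows of $P$. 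So the appeal to Proposition~\ref{prop_interval} applied to $e^{\mset}$ is unsupported, and I would expect both properties to fail in general (the paper never claims them).

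The paper proves only the weaker fact that is actually needed, and it does so at the level of image vectors rather than of the matrix set: for a fine partition $\mathcal T$, the set $\{\prod_{i=1}^N(I+d_iQ_i)x\colon Q_i\in\mset\}$ is an interval vector, because each one-step factor $I+d_i\mset$ is an interval matrix to which Proposition~\ref{prop_interval} and the discussion after it apply, and interval vectors are propagated step by step; then Theorem~\ref{thm_linear} bounds the distance between these linear-factor products and elements of $e^{\mset}$, and closedness of $e^{\mset}x$ lets one pass to the limit to conclude $e^{\mset}x=[\underline x,\overline x]$. Your fallback suggestion (slice-wise componentwise optimization plus a comparison argument) is in spirit much closer to this, but as stated it would only produce attained upper and lower bounds, not the fact that every intermediate vector is attained; to get the full interval you need precisely the step-by-step interval structure of the finite products together with the approximation bound, which is the content of the paper's proof. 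As it stands, the main route of your proposal has a genuine gap that cannot be patched by refining the interleaving argument.
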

\begin{proof}
 We will use the same notation as in the proof of Theorem~\ref{thm_linear}. 
 
 Let $\mathcal T$ be a partition of the unit interval such that the right hand side in \eqref{thm_linear_eq} is smaller than $\varepsilon$, where $\varepsilon>0$ is arbitrary. Because the rows of $\mset$ are separately specified, so are the rows of $I+d_i\mset$ for every $1\le i\le N$ and therefore, by Proposition~\ref{prop_interval} and the discussion following it, the set $\{\prod_{i=1}^N (I+d_iQ_i)x\colon Q_i\in\mset \text{ for every } 1\le i \le N \}$ is an interval $[\underline x_\mathcal T, \overline x_\mathcal T]$. Moreover, for every $\tilde e^Q\in e^\mset$ we have that $\overline x_\mathcal T \ge \tilde e^Qx - \varepsilon \| x\|$ and similarly $\underline x_\mathcal T \le \tilde e^Qx + \varepsilon \| x\|$. 
 
 Although $\underline x_\mathcal T$ and $\overline x_\mathcal T$ may not belong to $e^\mset x$, there are, by Theorem~\ref{thm_linear}, some $\underline y_\mathcal T$ and $\overline y_\mathcal T\in e^\mset x$ such that $\| \underline x_\mathcal T- \underline y_\mathcal T\|$ and $ \| \overline x_\mathcal T- \overline y_\mathcal T\|\le \varepsilon$. This together with closedness of $e^\mset x$ clearly implies that $\underline x = \inf_{\mathcal T} \underline x_\mathcal T$ and $\overline x = \sup_{\mathcal T} \overline x_\mathcal T$ belong to $e^\mset x$ which is then equal to the interval $[\underline x, \overline x]$.
\end{proof}
\begin{cor}
 Let $\mset$ be an interval matrix. Then, for every interval vector $[\underline x, \overline x]$, the set $e^\mset [\underline x, \overline x]$ is an interval, which we denote by $[\underline{e^\mset}\, \underline x, \overline{e^\mset}\, \overline x]$. 
\end{cor}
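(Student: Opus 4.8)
The plan is to repeat the proof of Theorem~\ref{interval-separately} almost verbatim, with the single vector $x$ replaced throughout by the interval vector $[\underline x,\overline x]$. First I would fix $\varepsilon>0$ and, using Theorem~\ref{thm_linear}, choose a partition $\mathcal T=\{0=t_0<\dots<t_N=1\}$ of the unit interval (with $d_i=t_i-t_{i-1}$) fine enough that every $\tilde e^Q\in e^\mset$ lies within $\varepsilon$ in operator norm of some product $\prod_{i=1}^N(I+d_iQ_i)$, $Q_i\in\mset$, and --- combining Theorem~\ref{thm_approx} with Corollary~\ref{approx-linear-alt} --- that every such product in turn lies within $\varepsilon$ of $e^\mset$. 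Since $\|Py-\Pi y\|\le\|P-\Pi\|\,\|y\|$ for $y\in[\underline x,\overline x]$, this yields $d_H(e^\mset[\underline x,\overline x],S_\mathcal T)\le 2\varepsilon\,\|[\underline x,\overline x]\|$, where $S_\mathcal T:=\{\prod_{i=1}^N(I+d_iQ_i)y\colon Q_i\in\mset,\ y\in[\underline x,\overline x]\}$; here $e^\mset[\underline x,\overline x]$ is compact, being the continuous image of the compact set $e^\mset\times[\underline x,\overline x]$ under $(P,y)\mapsto Py$.

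The one genuinely new ingredient is the claim that each $S_\mathcal T$ is itself an interval vector. For this I would note that $I+d_i\mset$ has separately specified rows whenever $\mset$ does, and then invoke Proposition~\ref{prop_interval} and the discussion following it, by which a set with separately specified rows maps an interval vector to an interval vector. Applying the maps $y\mapsto(I+d_i\mset)y$ in turn, starting from the box $[\underline x,\overline x]$, then exhibits $S_\mathcal T$ as an interval vector $[\underline z_\mathcal T,\overline z_\mathcal T]$ whose lower and upper endpoints are attained over the corners $\underline x$ and $\overline x$ of $[\underline x,\overline x]$ respectively.

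With this in hand the limiting argument is exactly the one in Theorem~\ref{interval-separately}: put $\underline z=\inf_\mathcal T\underline z_\mathcal T$ and $\overline z=\sup_\mathcal T\overline z_\mathcal T$ (componentwise, over all partitions of the unit interval; these exist because $\|e^\mset\|\le e^M$ bounds everything uniformly), check that $\underline z\le Py\le\overline z$ for every $P\in e^\mset$ and $y\in[\underline x,\overline x]$, that $\underline z,\overline z\in e^\mset[\underline x,\overline x]$ by closedness, and that any $v$ with $\underline z\le v\le\overline z$ lies, for $\mathcal T$ fine enough, within $O(\varepsilon)\|[\underline x,\overline x]\|$ of $S_\mathcal T$ and hence of $e^\mset[\underline x,\overline x]$, so $v\in e^\mset[\underline x,\overline x]$ by closedness; thus $e^\mset[\underline x,\overline x]=[\underline z,\overline z]$. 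Finally I would identify the endpoints: since each factor $I+d_i\mset$, hence every finite composition of such, hence (by passing to limits) every element of $e^\mset$, is order-preserving, one has $P\underline x\le Py\le P\overline x$ whenever $\underline x\le y\le\overline x$, so taking componentwise minima and maxima over $P\in e^\mset$ --- which exist because $e^\mset\underline x$ and $e^\mset\overline x$ are intervals by Theorem~\ref{interval-separately} --- gives $\underline z=\underline{e^\mset}\,\underline x$ and $\overline z=\overline{e^\mset}\,\overline x$, the asserted form.

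I expect the main obstacle to be making the second paragraph (and the endpoint identification at the end of the third) airtight: one must be certain that the approximating affine maps $I+d_i\mset$, their finite compositions, and their closure $e^\mset$ genuinely send interval vectors to interval vectors with corners pinned at $\underline x$ and $\overline x$ --- that is, that all these maps are monotone. This is exactly the structural feature that fails for the naive exponential $\exp'(\mset)$ of \eqref{pseudo_exponential} and for non-order-preserving factors, so it is the point at which the definition~\eqref{interval_exponential_formula} is really used; the remainder is a routine transcription of the proof of Theorem~\ref{interval-separately}.
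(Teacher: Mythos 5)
The paper gives this corollary no proof of its own: it is meant to be read off immediately from Theorem~\ref{interval-separately} together with the product rule $\mset[\underline x,\overline x]=[\underline Q\,\underline x,\overline Q\,\overline x]$ stated in Section~\ref{ss-ivm}, so your plan of re-running the whole approximation argument is a much heavier route than the intended one. More importantly, your write-up has a genuine gap, and it sits exactly at the step you yourself flag as the ``main obstacle'' and then claim is supplied by Definition~\ref{interval_exponential}: the order-preservation of the factors $I+d_iQ_i$, of their products, and of the elements of $e^{\mset}$. A linear map preserves the componentwise order precisely when its matrix is entrywise nonnegative, and nothing in the definition of an interval matrix or in \eqref{interval_exponential_formula} forces this: a general $Q\in\mset$ may have negative off-diagonal entries, so $I+d_iQ$ and $e^{d_iQ}$ can have negative entries, and products and limits of such maps are not monotone. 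The claim is not merely unproved but false at this level of generality: the singleton $\mset=\{Q\}$ with $Q=\left(\begin{smallmatrix}0&-1\\-1&0\end{smallmatrix}\right)$ is compact, convex, and (vacuously) has separately specified rows, $e^{\mset}=\{e^{Q}\}$, and $e^{Q}=\cosh(1)I-\sinh(1)\left(\begin{smallmatrix}0&1\\1&0\end{smallmatrix}\right)$ has negative off-diagonal entries; hence $e^{\mset}[\underline x,\overline x]=e^{Q}[\underline x,\overline x]$ is a genuine (non-axis-aligned) parallelogram, not an interval vector, and its componentwise minimum is not attained at $\underline x$. Consequently both the intervalhood of your sets $S_{\mathcal T}$ (which leans on the same Section~\ref{ss-ivm} rule) and the endpoint identification $\underline z=\underline{e^{\mset}}\,\underline x$, $\overline z=\overline{e^{\mset}}\,\overline x$ need a nonnegativity hypothesis that cannot be extracted from the limiting construction of $e^{\mset}$.

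To be fair, you have located a real soft spot: the displayed rule $\mset[\underline x,\overline x]=[\underline Q\,\underline x,\overline Q\,\overline x]$ in Section~\ref{ss-ivm}, and hence the corollary itself, already presuppose this monotonicity. It does hold in the intended application of Section~\ref{s-rcimc}, where $\mset$ consists of generator matrices, so that $e^{dQ}\ge 0$ for all $d\ge 0$ and $I+dQ\ge 0$ for $d$ small, but not for an arbitrary interval matrix. What is wrong in your proposal is the diagnosis that monotonicity ``is the point at which the definition~\eqref{interval_exponential_formula} is really used'': the definition provides no such thing, and the step can only be closed by adding an explicit structural hypothesis on $\mset$ (entrywise nonnegativity of $I+d\mset$ and of the exponentials, e.g.\ Metzler structure), not by appealing to the construction of $e^{\mset}$. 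The purely metric parts of your argument --- the $2\varepsilon$-approximation of $e^{\mset}[\underline x,\overline x]$ by $S_{\mathcal T}$ via Theorem~\ref{thm_linear} and Corollary~\ref{approx-linear-alt}, compactness, and the closedness argument --- are fine and simply mirror the proof of Theorem~\ref{interval-separately}.
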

Every $\tilde e^Qx_0$, where $\tilde e^Q\in e^\mset$ and $x_0\in [\underline x_0,  \overline x_0]$, satisfies the inequalities \eqref{initial_conds}, \eqref{diff_ineq_sup} and \eqref{diff_ineq_inf} by construction. The following theorem shows that the converse also holds. 
\begin{thm}
 Let $x(t)$ satisfy \eqref{initial_conds},  \eqref{diff_ineq_sup} and \eqref{diff_ineq_inf}, where $\mset$ is an interval matrix. Then $x(1)\in [\underline{e^\mset}x_0, \overline{e^\mset}x_0]$.
\end{thm}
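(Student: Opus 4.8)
The plan is to show that any solution $x(t)$ of the differential inequalities \eqref{initial_conds}, \eqref{diff_ineq_sup}, \eqref{diff_ineq_inf} is bounded, coordinatewise and from both sides, by vectors that arise from elements of the partitioned approximants $e^\mset_{\mathcal T}$, and then pass to the limit over finer and finer partitions using Theorem~\ref{thm_approx}. The natural strategy is a one-step comparison argument iterated along a partition $\mathcal T=\{0=t_0<\cdots<t_N=1\}$. On a single subinterval $[t_{i-1},t_i]$ of length $d_i$, I would compare $x(t_i)$ with $e^{d_iQ}x(t_{i-1})$ for a suitable $Q\in\mset$: the derivative bounds \eqref{diff_ineq_sup}--\eqref{diff_ineq_inf} say precisely that, componentwise, $x'(t)$ is squeezed between $\underline Q x(t)$ and $\overline Q x(t)$, i.e. between the best possible lower and upper rows available in $\mset$. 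Thus if I define $\underline x(t_i)$ and $\overline x(t_i)$ recursively by applying the ``lower'' and ``upper'' selections of rows from $\mset$ (as in Proposition~\ref{prop_interval}, but now to the evolved vector), I expect $\underline x(t_i)\le x(t_i)\le \overline x(t_i)$ to propagate.

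First I would make the comparison lemma precise. Fix $i$ and consider, on $[t_{i-1},t_i]$, the linear ODEs $\underline y'=\underline Q y$, $\underline y(t_{i-1})=x(t_{i-1})$ and $\overline y'=\overline Q y$, $\overline y(t_{i-1})=x(t_{i-1})$, where $\underline Q,\overline Q\in\mset$ are the matrices from Proposition~\ref{prop_interval} associated with the vector $x(t_{i-1})$ — though one subtlety is that the optimal row selection depends on the current vector, which changes along the subinterval, so a cleaner route is a differential-inequality (Gronwall-type) comparison done directly on $x$ rather than freezing $Q$. Concretely, from \eqref{diff_ineq_sup} one has $D^+ x(t) \le \overline Q x(t)$ in each coordinate (using that $\mset x(t)$ is the interval $[\underline Q x(t),\overline Q x(t)]$ by \eqref{Q-interval}), and a standard comparison theorem for systems of differential inequalities with a quasimonotone (here: the off-diagonal structure is not assumed, so one must be careful) right-hand side would give $x(t)\le z(t)$ where $z$ solves $z'=\overline Q(t) z$ for the time-varying selection $\overline Q(t)$ realizing the upper bound. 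Iterating, $x(1)\le \prod_{i} e^{d_i \overline Q_i} x_0 + (\text{error})$-type estimates, where the product lies in $e^\mset_{\mathcal T}$; symmetrically for the lower bound.

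After the one-step estimate, the second half is the limiting argument, which mirrors the proof of Theorem~\ref{thm_approx}: choose partitions $\mathcal T^{(n)}$ with $\Delta(\mathcal T^{(n)})\to 0$; on each, produce $\overline P_n,\underline P_n\in e^\mset_{\mathcal T^{(n)}}$ with $\underline P_n x_0 \le x(1) \le \overline P_n x_0$ up to an error that vanishes as the mesh shrinks (the error coming from the discrepancy between the true flow and the piecewise-constant-$Q$ flow, controllable by the same kind of bounds used in Lemma~\ref{approx-lemma}); then by compactness of $e^\mset$ (established after Definition~\ref{interval_exponential}) extract convergent subsequences $\overline P_n\to\overline P$, $\underline P_n\to\underline P$ in $e^\mset$, and conclude $\underline P x_0 \le x(1)\le \overline P x_0$, hence $x(1)\in[\underline{e^\mset}x_0,\overline{e^\mset}x_0]$ by Theorem~\ref{interval-separately}.

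The main obstacle I anticipate is the comparison step itself: comparison theorems for systems of differential inequalities $x'\le Ax$ generally require a sign condition on the off-diagonal entries of $A$ (quasimonotonicity / the matrix being Metzler), which is exactly what holds for rate matrices of Markov chains but is \emph{not} assumed here for a general interval matrix $\mset$. So either the statement implicitly restricts to that setting, or one needs a genuinely vector-valued argument: track the coordinatewise extremal selections and control how the optimal row-choice drifts over a subinterval of length $d_i$, showing the drift contributes only $O(d_i^2)$ error per step and $O(\Delta(\mathcal T))$ in total. Making that drift estimate rigorous — essentially a Lipschitz bound on $x\mapsto \underline Q x$ restricted to the relevant region, combined with the a priori bound $\|x(t)\|\le e^M\|x_0\|$ from \eqref{diff_ineq_sup}--\eqref{diff_ineq_inf} and Lemma~\ref{norm_exp} — is where the real work lies; the rest is bookkeeping analogous to the approximation results already proved.
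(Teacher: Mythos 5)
Your plan correctly identifies the general architecture (discretize, compare $x$ at gridpoints with products built from matrices of $\mset$, then pass to the limit using the approximation results and compactness), but it stops short of a proof precisely at the point you yourself flag as ``where the real work lies''. The route you lean on --- a comparison theorem giving $x(t)\le z(t)$ for a solution of $z'=\overline Q(t)z$ --- indeed fails for a general interval matrix, as you observe, because no quasimonotonicity/Metzler condition is assumed; and your fallback (tracking the coordinatewise extremal row selections and proving an $O(d_i^2)$ drift bound per step) is only announced, not carried out. Moreover, the escape hatch that ``the statement implicitly restricts to that setting'' is not available: the theorem is stated and proved in the paper for arbitrary interval matrices. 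So the central one-step estimate is missing from your argument.

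The paper's proof avoids any comparison principle for flows by a different device, which is the idea your proposal lacks. Fix $\varepsilon>0$; by compactness of $[0,1]$ the condition \eqref{diff_ineq_sup} is upgraded to a uniform discrete inequality $x(t+\tfrac1n)-x(t)\le\tfrac1n\overline{\mset}x(t)+\tfrac\varepsilon n$. At each gridpoint one picks $Q_k\in\mset$ attaining $\overline{\mset}x(\tfrac{k-1}n)=Q_kx(\tfrac{k-1}n)$ --- the extremal selection is made along the \emph{true} trajectory, not along the approximation --- and sets $\tilde x(\tfrac kn)=\prod_{i\le k}\left(I+\tfrac1nQ_i\right)x_0$. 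One then controls only the one-sided deviation $d_k=\max_i\left(x_i(\tfrac kn)-\tilde x_i(\tfrac kn)\right)$ through the discrete Gronwall recursion $d_k\le d_{k-1}\left(1+\tfrac Mn\right)+\tfrac\varepsilon n$, giving $d_n\le\tfrac\varepsilon M(e^M-1)$: there is no claim that $x\le\tilde x$, only that the excess vanishes with $\varepsilon$, so no exact per-step drift estimate is needed. Corollary~\ref{approx-linear-alt} then places $\tilde x(1)$ within $\varepsilon$ of $e^\mset x_0$, and closedness of $e^\mset x_0$ (Theorem~\ref{interval-separately}) lets $\varepsilon\to0$, yielding $x(1)\le\overline{e^\mset}x_0$; the lower bound is symmetric. (Your instinct about off-diagonal signs is not baseless --- bounding $Q_k(x-\tilde x)$ by $\|Q_k\|d_{k-1}$ tacitly treats $d_{k-1}$ as a two-sided bound, a point worth handling carefully --- but the adaptive choice of $Q_k$ on the true trajectory combined with the $\varepsilon$-slack one-sided Gronwall argument, rather than an ODE comparison theorem, is the mechanism that makes the proof go through without Metzler assumptions.)
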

\begin{proof}
 Because of symmetry we only prove that $x(1)\le \overline{e^\mset}x_0$. Suppose that $\| \mset\| \le M$ and let $\varepsilon>0$ be given. Compactness of the interval $[0, 1]$ implies the existence of $n$ large enough that 
 \begin{equation*}
  x\left(t+\frac 1n\right) - x(t) \le \frac 1n \overline{\mset}x(t) + \varepsilon \frac 1n
 \end{equation*}
for every $t\in [0, 1-1/n]$. Further, let $n$ be large enough so that 
\begin{equation}\label{dist_lin_exp}
 \left\|\prod_{i=1}^n \left(I + \frac 1n Q_i\right) - \prod_{i=1}^n \left(e^{\frac 1n Q_i}\right) \right\| \le \varepsilon,  
\end{equation}
for every sequence of matrices $Q_1, \ldots, Q_n\in \mset$, which is possible by Corollary~\ref{approx-linear-alt}. 

Now let $Q_k$ be the matrices such that
\begin{equation*}
 \overline{\mset}x\left(\frac {k-1}n\right) = Q_k x\left(\frac {k-1}n\right).
\end{equation*}
and
\begin{equation*}
 \tilde x\left(\frac {k}n\right) = \prod_{i=1}^k \left(I+\frac 1n Q_i\right)x_0.
\end{equation*}
By \eqref{dist_lin_exp}, 
\begin{equation}\label{dist_to_exp}
d_H(\tilde x(1),  e^\mset x_0)\le \varepsilon. 
\end{equation}
 Now let 
\begin{equation*}
 d_k = \max_{i}\left(x_i\left(\frac {k}n\right) - \tilde x_i\left(\frac {k}n\right) \right)
\end{equation*}
where $x_i$ denotes the $i$th component of the corresponding vector. 

We have 
\begin{align*}
 x\left(\frac {k}n\right) - \tilde x\left(\frac {k}n\right) & \le x\left(\frac {k-1}n\right) + \frac 1n Q_k x\left(\frac {k-1}n\right) + \frac 1n \varepsilon \\
 & \qquad - \tilde x\left(\frac {k-1}n\right) - \frac 1n Q_k \tilde x\left(\frac {k-1}n\right) \\
 & \le d_{k-1} + \frac 1n \| Q_k \| d_{k-1} + \frac 1n \varepsilon \\
 & \le d_{k-1}\left(1+\frac Mn\right) + \frac 1n \varepsilon. 
\end{align*}
In the above equations all constants are used as constant vectors. The values of $d_k$ are therefore bounded from above by the solutions of the recurrence equations $d_{k} = d_{k-1}\left(1+\frac Mn\right) + \frac 1n \varepsilon$, whose solution is $d_k = \frac{\varepsilon}{M}\left(\left(1+\frac{M}{n}\right)^k -1 \right)$ and therefore $d_n\le \frac{\varepsilon}{M}(e^M-1)$. By \eqref{dist_to_exp}, the distance between $x(1)$ and some element in $e^\mset x_0$ is smaller than $d_n+\varepsilon$, which can be made arbitrarily small by taking $\varepsilon$ small enough. Therefore we conclude that $x(1)-\overline {e^\mset}x_0\le 0$ which is equivalent to saying that $x(1)\le \overline{e^\mset}x_0$. This concludes the proof.
\end{proof}

\section{The relation with continuous time imprecise Markov chains}\label{s-rcimc}
The method described in previous sections has a direct applicability in the theory of continuous time Markov chains. Consider Kolmogorov's backward equation 
\begin{equation*}
	\frac{d}{dt} P(t) = QP(t)
\end{equation*}
and let $x_0$ be an arbitrary column vector. Then we denote $x(t) = P(t) x(0)$, where $x(0)=x_0$. The Kolmogorov's equation then translates into 
 \begin{equation*}
	\frac{d}{dt} x(t) = Qx(t),
\end{equation*}
which is exactly the same equation as \eqref{mde}. When imprecision is involved it can be represented through a convex set of generator matrices $\mathcal Q$, which then allows the generalisation of the above equation as in \eqref{initial_conds}-\eqref{diff_ineq_inf}. The solutions of the generalised Kolmogorov equations are then of the form of function $t\mapsto \mathcal X(t)$, where all $\mathcal X(t)$ are interval vectors. Hence we can write $\mathcal X(t) = [\underline x(t), \overline x(t)]$. This induces the operators
\begin{align*}
\underline T(t) & \colon x \mapsto \underline x(t)
\intertext{and}
\overline T(t) & \colon x \mapsto \overline x(t), 
\end{align*}
which are known from the theory of discrete time imprecise Markov chains as the lower and upper transition operator, now dependent on time. Therefore we have a natural correspondence between the solutions of the generalised Kolmogorov's equation and transition operators, similarly as in the precise case of continuous time Markov chains. 

Let us conclude with an example.  
\begin{ex}
Let an imprecise generator matrix be given in terms of a matrix interval $\mathcal Q = [\underline Q, \overline Q]$, where 
\[ \underline Q = \begin{pmatrix}
-7 & 4 & 0 \\ 
2 & -4 & 1 \\ 
0 & 3 & -6 \\ 
\end{pmatrix} \qquad \text{and} \qquad \overline Q = \begin{pmatrix}
-5 & 5 & 2 \\ 
3 & -3 & 2 \\ 
1 & 4 & -4 \\ 
\end{pmatrix}, \] 
and all $Q\in \mathcal Q$ are assumed to have zero row sums. 

Let us approximate $e^{0.2\mathcal Q}$ with $\left(I+\frac {0.2}n \mathcal Q\right)^n$. When $n=80$, the maximal theoretical error is 0.12 and the approximate lower and upper bounds are 
 \[ \underline P(0.2) = \begin{pmatrix}
 0.3164 & 0.3839   & 0.0421       \\ 
0.1545   & 0.5826   & 0.0927       \\
0.0635   & 0.3340   & 0.4019       \\
\end{pmatrix} \quad \overline P(0.2) = \begin{pmatrix}
0.4945   & 0.4984   & 0.2338   \\ 
0.2864   & 0.6921   & 0.2338   \\ 
0.1853   & 0.4432   & 0.5323  
\end{pmatrix} \] 
With $n=200$, the maximal theoretical error drops to 0.02 and the lower and upper bound are approximated with
 \[ \underline P(0.2) = \begin{pmatrix}
0.3181   & 0.3830   & 0.0420       \\ 
0.1541   & 0.5836   & 0.0924       \\ 
0.0633   & 0.3332   & 0.4033       \\ 
\end{pmatrix} \quad \overline P(0.2) = \begin{pmatrix}
0.4957   & 0.4972   & 0.2333   \\ 
0.2858   & 0.6928   & 0.2333   \\ 
0.1849   & 0.4421   & 0.5334  \\ 
\end{pmatrix} \] 
\end{ex}
The presented method is apparently not very efficient in terms of convergence. Therefore, it remains a challenge for further work to find more efficient methods for estimating imprecise continuous time Markov chains. 
\bibliography{refer}

\appendix 
\section{Inequalities}\label{a-i}
Here we list some of the technical results that are used in the proofs of the main results in the paper. 

Let $f_1, \ldots, f_n$ and $f_1', \ldots, f_n'$ be continuous mappings in a normed space $X$. For every $1\le i \le n$ denote
\begin{equation}\label{bigF}
  F_i = f_1\circ \dots \circ f_i \circ f_{i+1}' \circ \dots \circ f_n'.  
\end{equation}
\begin{lemma}\label{dist_kompozitum}
Suppose that $\|f_i \|, \|f'_i \| \le M$ and $\| f_i - f_i'\| \le d$ for every $1\le i \le n$.  Then 
\[ \| F_n - F_0 \| \le ndM^{n-1}. \]
\end{lemma}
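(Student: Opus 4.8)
The plan is to telescope. I would write
\[ F_n - F_0 \;=\; \sum_{i=1}^n \bigl(F_i - F_{i-1}\bigr), \]
and note that for each $i$ the compositions $F_i$ and $F_{i-1}$ are identical in every slot except the $i$-th, where $F_i$ carries $f_i$ and $F_{i-1}$ carries $f_i'$. Reading the $f_j$ and $f_j'$ as bounded linear operators on $X$ (the only case used in the paper — each factor is of the form $e^{dQ}$ or $I+dQ$), composition is bilinear and the operator norm is submultiplicative, so
\[ F_i - F_{i-1} \;=\; f_1 \circ \cdots \circ f_{i-1} \circ (f_i - f_i') \circ f_{i+1}' \circ \cdots \circ f_n'. \]

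Next I would bound each summand using submultiplicativity together with the hypotheses $\|f_j\|,\|f_j'\|\le M$ and $\|f_j-f_j'\|\le d$:
\[ \|F_i - F_{i-1}\| \;\le\; \|f_1\|\cdots\|f_{i-1}\|\cdot\|f_i-f_i'\|\cdot\|f_{i+1}'\|\cdots\|f_n'\| \;\le\; M^{\,i-1}\,d\,M^{\,n-i} \;=\; d\,M^{\,n-1}. \]
Summing the triangle inequality over $i=1,\dots,n$ then yields $\|F_n - F_0\|\le n\,d\,M^{\,n-1}$, which is exactly the assertion of Lemma~\ref{dist_kompozitum}.

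The argument is essentially routine; the one point that needs a word of care is the factorization step $F_i - F_{i-1} = (f_1\circ\cdots\circ f_{i-1})\circ(f_i-f_i')\circ(f_{i+1}'\circ\cdots\circ f_n')$ and the submultiplicative estimate applied to it. Both rely on linearity of the maps, so that precomposition with $f_{i+1}'\circ\cdots\circ f_n'$ and postcomposition with $f_1\circ\cdots\circ f_{i-1}$ are linear operations of operator norm at most $M^{\,n-i}$ and $M^{\,i-1}$ respectively. Were one to want genuinely nonlinear Lipschitz $f_j$, the same telescoping works verbatim once $\|\cdot\|$ is read as a common Lipschitz bound and one invokes $\|g(u)-g(v)\|\le\mathrm{Lip}(g)\,\|u-v\|$ on the outer block; I would simply state the lemma for bounded linear operators to sidestep this distinction. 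No genuine obstacle is expected.
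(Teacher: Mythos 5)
Your proposal is correct and is essentially the paper's own proof: the same telescoping decomposition $F_n-F_0=\sum_{i=1}^n(F_i-F_{i-1})$, the same factorization and submultiplicative bound $\|F_i-F_{i-1}\|\le M^{i-1}dM^{n-i}$, and the same summation. Your side remark about linearity (or a Lipschitz reading) is a fair point that the paper glosses over, but it does not change the argument, since all factors actually used are of the linear form $e^{dQ}$ or $I+dQ$.
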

\begin{proof}
 We first notice that 
 \begin{align} 
    \notag \| F_i - F_{i-1} \| & = \| f_1\circ \dots \circ f_i \circ f_{i+1}'\circ \dots \circ f_n'  - f_1\circ \dots \circ f_{i-1} \circ f_{i}'\circ \dots \circ f_n'  \| \notag \\  
    & \le \| f_1\circ \dots \circ f_{i-1} \| \| f_{i}'-f_{i}\|\| f_{i+1}'\circ \dots \circ f_n' \| \label{l4_eq1} \\    
\notag & \le M^{n-1} d . 
 \end{align}
 Further we have
 \begin{align}
    \| F_n - F_0 \| & = \left\| \sum_{i=1}^n F_i - F_{i-1} \right\| \le  \sum_{i=1}^n\left\|F_i - F_{i-1} \right\| \le n M^{n-1}d.\label{l4_eq2}
 \end{align} 
\end{proof}
 The following corollary follows immediately from the above proof.
 \begin{cor}\label{dist_kompozitum_alt}
  Let, under the assumptions of Lemma~\ref{dist_kompozitum}, the inequality
  \begin{equation*}
   \| f_1\circ \cdots \circ f_{i-1}\| \| f'_{i+1}\circ \cdots \circ f'_{n}\| \le A 
  \end{equation*}
  hold for every $1\le i \le n$. 
  Then
  \begin{equation*}
   \| F_n - F_0 \| \le ndA.
  \end{equation*}
 \end{cor}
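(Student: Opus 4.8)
The plan is to observe that the corollary is already fully contained in the proof of Lemma~\ref{dist_kompozitum}: the only change required is to replace the crude estimate on the product of composition norms by the quantity $A$ provided by the extra hypothesis. First I would recall the telescoping step used in \eqref{l4_eq2}, namely $F_n - F_0 = \sum_{i=1}^n (F_i - F_{i-1})$, so that by the triangle inequality it suffices to bound each difference $\|F_i - F_{i-1}\|$ separately and then sum over $i$.

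For a fixed $i$, I would reuse the factorization leading to \eqref{l4_eq1}: since $F_i$ and $F_{i-1}$ agree in every slot except the $i$th, where they carry $f_i$ and $f_i'$ respectively, their difference factors through $f_i - f_i'$, and submultiplicativity of the operator norm gives
\[
 \|F_i - F_{i-1}\| \le \|f_1\circ\cdots\circ f_{i-1}\|\,\|f_i - f_i'\|\,\|f_{i+1}'\circ\cdots\circ f_n'\|.
\]
Then I would apply the hypothesis $\|f_i - f_i'\| \le d$ of Lemma~\ref{dist_kompozitum} together with the new assumption $\|f_1\circ\cdots\circ f_{i-1}\|\,\|f_{i+1}'\circ\cdots\circ f_n'\| \le A$ to conclude $\|F_i - F_{i-1}\| \le dA$. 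Summing over $i = 1,\ldots,n$ yields $\|F_n - F_0\| \le ndA$, which is the claim.

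There is essentially no obstacle here: every inequality invoked is already displayed in the proof of Lemma~\ref{dist_kompozitum}, and the present statement merely records that the factor $M^{n-1}$ appearing there was obtained precisely by bounding $\|f_1\circ\cdots\circ f_{i-1}\|\,\|f_{i+1}'\circ\cdots\circ f_n'\|$ by $M^{i-1}\cdot M^{n-i}$, so that replacing this product by any other bound $A$ passes through the same argument unchanged. The one point worth stating explicitly is that the factorization of $F_i - F_{i-1}$ through $f_i - f_i'$ and the use of operator-norm submultiplicativity rely on the maps being bounded linear operators, which is the setting in which the lemma is actually applied in the rest of the paper.
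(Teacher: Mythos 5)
Your proof is correct and follows essentially the same route as the paper, which simply notes that the corollary is immediate from the proof of Lemma~\ref{dist_kompozitum}: keep the telescoping sum and the factorization \eqref{l4_eq1}, but replace the bound $M^{i-1}\cdot M^{n-i}$ on the outer product of norms by the hypothesized bound $A$, giving $\|F_i-F_{i-1}\|\le dA$ and hence $\|F_n-F_0\|\le ndA$.
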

 In the sequel, we use for matrices an operator norm induced by any of the equaivalent vector norms for finite dimensional vector spaces.  
 \begin{cor}\label{dist_matrix_power}
  Let $A_1$ and $A_2$ be square matrices and let $\| A_i\| \le M$ for $i=1, 2$. Then 
  \begin{equation*}
   \| A_1^n - A_2^n \| \le n M^{n-1} \| A_1-A_2 \|
  \end{equation*}
  for every $n\in\NN$.
 \end{cor}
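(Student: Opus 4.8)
The plan is to obtain this as an immediate specialization of Lemma~\ref{dist_kompozitum}. First I would take the normed space $X$ to be the finite-dimensional vector space on which $A_1$ and $A_2$ act, and set $f_i$ to be the linear map $x\mapsto A_1x$ and $f_i'$ the linear map $x\mapsto A_2x$, for every $1\le i\le n$. With these choices the composite $F_i$ defined in \eqref{bigF} is precisely the linear map induced by the matrix $A_1^iA_2^{n-i}$; in particular $F_n$ corresponds to $A_1^n$ and $F_0$ to $A_2^n$.

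Next I would verify the hypotheses of Lemma~\ref{dist_kompozitum}. Since we use the operator norm induced by a vector norm, $\|f_i\| = \|A_1\| \le M$ and $\|f_i'\| = \|A_2\| \le M$, while $\|f_i - f_i'\|$ is the operator norm of $x\mapsto (A_1-A_2)x$, i.e. $\|A_1-A_2\|$. Hence, taking $d = \|A_1-A_2\|$, Lemma~\ref{dist_kompozitum} yields $\|F_n - F_0\| \le n\|A_1-A_2\|M^{n-1}$, which is exactly the claimed bound once $F_n$ and $F_0$ are identified with $A_1^n$ and $A_2^n$.

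There is essentially no genuine obstacle here: the only point worth a word is that the operator norm is submultiplicative, so the factorization $\|f_1\circ\cdots\circ f_{i-1}\|\,\|f_i'-f_i\|\,\|f_{i+1}'\circ\cdots\circ f_n'\|$ used in \eqref{l4_eq1} goes through verbatim for these linear maps. The degenerate case $n=0$ is trivial with the empty product read as the identity, and for $n\ge 1$ the argument above applies directly.
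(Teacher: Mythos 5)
Your proof is correct and is exactly the intended argument: the paper states this corollary as an immediate specialization of Lemma~\ref{dist_kompozitum} with $f_i$ and $f_i'$ being multiplication by $A_1$ and $A_2$, which is precisely what you do.
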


 \begin{lemma} 
  For every $x\ge 0$ we have the inequality 
  \begin{equation*}
   e^x - 1 -x \le \frac 12 x^2 e^x.
  \end{equation*}

 \end{lemma}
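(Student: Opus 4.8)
The plan is to prove the inequality by a coefficientwise comparison of power series. Expanding $e^x = \sum_{k\ge 0} x^k/k!$, the left-hand side becomes $e^x - 1 - x = \sum_{k\ge 2} x^k/k!$, while the right-hand side, after multiplying the series for $e^x$ by $\tfrac12 x^2$ and re-indexing $k=j+2$, becomes $\tfrac12 x^2 e^x = \sum_{k\ge 2} x^k/\bigl(2(k-2)!\bigr)$. Since we only care about $x\ge 0$, every term in both series is nonnegative, so it suffices to check the termwise inequality $\tfrac{1}{k!}\le \tfrac{1}{2(k-2)!}$ for each $k\ge 2$. This is equivalent to $2\le k!/(k-2)! = k(k-1)$, which holds for all $k\ge 2$ (with equality at $k=2$). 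Summing over $k\ge 2$ then gives $e^x-1-x\le \tfrac12 x^2 e^x$ for all $x\ge 0$.

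An equally short alternative, if a series-free argument is preferred, is the standard calculus route: set $g(x) = \tfrac12 x^2 e^x - e^x + 1 + x$, observe $g(0)=0$ and $g'(0) = 0$, and compute $g''(x) = xe^x\bigl(2+\tfrac{x}{2}\bigr)\ge 0$ for $x\ge 0$. Hence $g'$ is nondecreasing on $[0,\infty)$ and therefore nonnegative, so $g$ is nondecreasing on $[0,\infty)$ and therefore nonnegative, which is exactly the claimed inequality.

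There is essentially no obstacle here. The only point requiring any attention is the re-indexing in the series comparison (respectively, carrying out the two differentiations correctly in the alternative). I would use the coefficientwise comparison, since it makes the role of the hypothesis $x\ge 0$ transparent and needs no monotonicity bookkeeping.
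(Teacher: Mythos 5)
Your series comparison is correct and is essentially the paper's own argument: the paper writes $e^x-1-x=\sum_{n\ge 0}\frac{x^2}{(n+1)(n+2)}\frac{x^n}{n!}$ and bounds $\frac{1}{(n+1)(n+2)}\le\frac12$, which is exactly your termwise inequality $\frac{1}{k!}\le\frac{1}{2(k-2)!}$ after re-indexing. The calculus alternative you sketch is also valid but unnecessary; the primary route matches the paper.
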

 \begin{proof}
  We have 
  \begin{equation*}
   e^x - 1 - x = \sum_{n=2}^\infty \frac{x^n}{n!} = \sum_{n=0}^\infty \frac{x^2}{(n+1)(n+2)} \frac{x^n}{n!} \le \frac 12 x^2 e^x .
  \end{equation*}
 \end{proof}
 
 \begin{lemma}\label{norm_exp}
  Let $Q$ be an arbitrary matrix. Then 
  \begin{equation*}
   \| e^Q \| \le e^{\|Q \|}.
  \end{equation*}
 \end{lemma}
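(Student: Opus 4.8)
The plan is to prove $\|e^Q\|\le e^{\|Q\|}$ directly from the power-series definition of the matrix exponential together with submultiplicativity and subadditivity of the operator norm. First I would write $e^Q=\sum_{n=0}^\infty \frac{1}{n!}Q^n$, noting that this series converges absolutely in the (complete) normed space of matrices, since the partial sums are Cauchy: the operator norm is submultiplicative, so $\|Q^n\|\le\|Q\|^n$, and $\sum_{n\ge 0}\frac{1}{n!}\|Q\|^n<\infty$. Hence $e^Q$ is well defined and the rearrangement/limit manipulations below are justified.

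Next I would estimate the norm of the sum by the triangle inequality applied to the partial sums and then passing to the limit: for each $N$,
\[
 \left\| \sum_{n=0}^{N} \frac{1}{n!} Q^n \right\| \le \sum_{n=0}^{N} \frac{1}{n!}\,\|Q^n\| \le \sum_{n=0}^{N} \frac{1}{n!}\,\|Q\|^n.
\]
Letting $N\to\infty$ and using continuity of the norm on the left and convergence of the scalar exponential series on the right gives $\|e^Q\|\le \sum_{n=0}^\infty \frac{1}{n!}\|Q\|^n = e^{\|Q\|}$, which is exactly the claim.

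There is essentially no hard step here; the only things to be careful about are that the operator norm used is indeed submultiplicative (true for any operator norm induced by a vector norm, as stated in the paragraph preceding Lemma~\ref{norm_exp}), so that $\|Q^n\|\le\|Q\|^n$ holds, and that one is allowed to interchange the norm with the infinite sum, which is precisely the content of continuity of the norm combined with absolute convergence of the series. If one prefers to avoid invoking absolute convergence explicitly, an alternative is to use the limit formula $e^Q=\lim_{k\to\infty}(I+Q/k)^k$ together with $\|I+Q/k\|\le 1+\|Q\|/k$ and submultiplicativity, giving $\|e^Q\|\le\lim_{k\to\infty}(1+\|Q\|/k)^k=e^{\|Q\|}$; either route is routine.
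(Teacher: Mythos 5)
Your proposal is correct and follows the same route as the paper: expand $e^Q$ as its power series, apply the triangle inequality and submultiplicativity to get $\|Q^n\|\le\|Q\|^n$, and sum to obtain $e^{\|Q\|}$. The extra care you take with partial sums and absolute convergence is fine but not needed beyond what the paper's one-line computation already implies.
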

 \begin{proof}
 \begin{equation*}
  \| e^Q \|  = \left\| \sum_{n=0}^\infty \frac{Q^n}{n!} \right\| \le  \sum_{n=0}^\infty \frac{\|Q\|^n}{n!} = e^{\|Q \|}.
 \end{equation*}  
 \end{proof}
  
 \begin{lemma}\label{linear_one_matrix}
  For every matrix $Q$ we have that 
  \begin{equation*}
   \| e^Q - I - Q \| \le \frac 12 \|Q\|^2e^{\|Q\|}.
  \end{equation*}
 \end{lemma}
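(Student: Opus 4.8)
The statement to prove is Lemma~\ref{linear_one_matrix}: for every matrix $Q$, $\| e^Q - I - Q \| \le \frac 12 \|Q\|^2 e^{\|Q\|}$.

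\medskip

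The plan is to expand $e^Q$ as its defining power series and subtract off the first two terms, exactly mirroring the argument already used in the preceding lemma (the scalar inequality $e^x - 1 - x \le \frac12 x^2 e^x$). First I would write $e^Q - I - Q = \sum_{n=2}^{\infty} \frac{Q^n}{n!}$, which is legitimate since the exponential series converges absolutely in any matrix norm. Then I would apply the triangle inequality together with submultiplicativity of the operator norm, $\|Q^n\| \le \|Q\|^n$, to bound $\left\| \sum_{n=2}^{\infty} \frac{Q^n}{n!} \right\| \le \sum_{n=2}^{\infty} \frac{\|Q\|^n}{n!}$. At this point the problem has been reduced to the scalar case with $x = \|Q\|$, and the right-hand side equals $e^{\|Q\|} - 1 - \|Q\|$, which by the lemma two steps earlier is at most $\frac12 \|Q\|^2 e^{\|Q\|}$.

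\medskip

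Alternatively, and perhaps more cleanly, I would reproduce the telescoped-factorial trick directly on the series: $\sum_{n=2}^{\infty} \frac{\|Q\|^n}{n!} = \sum_{k=0}^{\infty} \frac{\|Q\|^2}{(k+1)(k+2)} \frac{\|Q\|^k}{k!} \le \frac12 \|Q\|^2 \sum_{k=0}^{\infty} \frac{\|Q\|^k}{k!} = \frac12 \|Q\|^2 e^{\|Q\|}$, using that $(k+1)(k+2) \ge 2$ for $k \ge 0$. Either route gives the claim in two or three lines.

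\medskip

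There is essentially no obstacle here — the only things being used are absolute convergence of the matrix exponential series (so that termwise norm estimates are valid and rearrangement into a tail is justified), submultiplicativity of the operator norm, and an elementary scalar bound already established in the appendix. The one point worth a moment's care is simply citing the right earlier result for the scalar step rather than re-deriving it, so that the proof stays as short as its scalar analogue.
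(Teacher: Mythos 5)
Your proposal is correct and follows essentially the same argument as the paper, which expands $e^Q - I - Q$ as the series tail $\sum_{n\ge 2} Q^n/n!$ and bounds it via the factor $(n+1)(n+2)\ge 2$ together with submultiplicativity, exactly as in your second route (and your first route via the scalar lemma is just a harmless repackaging of the same estimate). No gaps.
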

 \begin{proof}
  We have that 
  \begin{align*}
   \| e^Q - I - Q \| & = \left\| \sum_{n=2}^\infty \frac {Q^n}{n!} \right\| 
    \le \sum_{n=0}^\infty \left\| \frac{Q^2}{(n+1)(n+2)}\frac {Q^n}{n!} \right\| 
    \le \frac 12 \| Q\|^2 e^{\| Q \|} .
  \end{align*}
 \end{proof}

 \begin{lemma}\label{linear_two_matrices}
  Let $Q_1$ and $Q_2$ be arbitrary matrices such that $\| Q_1\|, \| Q_2 \|\le M$. Then 
  \begin{equation*}
    \| e^{Q_1}e^{Q_2} - I - Q_1 - Q_2 \| \le  M^2 e^M(1.5+e^M).
  \end{equation*} 
 \end{lemma}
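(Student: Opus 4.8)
The plan is to reduce the bound for $\|e^{Q_1}e^{Q_2}-I-Q_1-Q_2\|$ to the already-established single-matrix estimate of Lemma~\ref{linear_one_matrix} together with elementary triangle-inequality bookkeeping. First I would write
\[
e^{Q_1}e^{Q_2}-I-Q_1-Q_2 = \bigl(e^{Q_1}-I-Q_1\bigr)e^{Q_2} + Q_1\bigl(e^{Q_2}-I\bigr) + \bigl(e^{Q_2}-I-Q_2\bigr),
\]
which one checks by expanding: $e^{Q_1}e^{Q_2} = (e^{Q_1}-I-Q_1)e^{Q_2} + (I+Q_1)e^{Q_2}$, and $(I+Q_1)e^{Q_2} = e^{Q_2} + Q_1 e^{Q_2}$, so after subtracting $I+Q_1+Q_2$ the remaining terms regroup as above. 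This is the only genuinely ``creative'' step, and it is a completely standard splitting; there is no real obstacle here.

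Next I would estimate each of the three summands separately. For the first, Lemma~\ref{linear_one_matrix} gives $\|e^{Q_1}-I-Q_1\|\le \tfrac12\|Q_1\|^2 e^{\|Q_1\|}\le \tfrac12 M^2 e^M$, and Lemma~\ref{norm_exp} gives $\|e^{Q_2}\|\le e^{\|Q_2\|}\le e^M$, so the first term is bounded by $\tfrac12 M^2 e^{2M}$. For the third term, Lemma~\ref{linear_one_matrix} again gives $\|e^{Q_2}-I-Q_2\|\le \tfrac12 M^2 e^M$. For the middle term, $\|Q_1\|\le M$ and $\|e^{Q_2}-I\|\le \|e^{Q_2}-I-Q_2\| + \|Q_2\| \le \tfrac12 M^2 e^M + M$; multiplying by $\|Q_1\|\le M$ gives a bound of $\tfrac12 M^3 e^M + M^2$.

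Finally I would add the three bounds:
\[
\tfrac12 M^2 e^{2M} + \bigl(\tfrac12 M^3 e^M + M^2\bigr) + \tfrac12 M^2 e^M = M^2\Bigl(1 + \tfrac12 e^M + \tfrac12 M e^M + \tfrac12 e^{2M}\Bigr),
\]
and then absorb the excess into the stated form $M^2 e^M(1.5 + e^M)= M^2(1.5\,e^M + e^{2M})$. Here one uses crude bounds such as $1\le \tfrac12 e^M$ (since $M\ge 0$), $M e^M \le e^{2M}$, and $\tfrac12 e^M \le e^M$ to check that $1 + \tfrac12 e^M + \tfrac12 M e^M + \tfrac12 e^{2M}\le 1.5\,e^M + e^{2M}$; indeed $1 + \tfrac12 e^M \le e^M \le 1.5\,e^M$ covers the first two terms while $\tfrac12 M e^M + \tfrac12 e^{2M}\le e^{2M}$ covers the rest. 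The only mild subtlety — and the closest thing to an ``obstacle'' — is checking that the constants really do fit under the claimed envelope; this is routine once one commits to using $M\ge 0$ to trade polynomial-in-$M$ factors against exponentials, but it requires a little care so as not to produce a weaker constant than $(1.5+e^M)$.
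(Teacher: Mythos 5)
Your proof is correct in substance but takes a genuinely different route from the paper. The paper works directly with the double power series, writing $e^{Q_1}e^{Q_2}-I-Q_1-Q_2=\sum_{m+n>1}Q_1^mQ_2^n/(m!\,n!)$ and splitting it into the three groups $m=0,\,n\ge 2$; $m=1,\,n\ge 1$; and $m\ge 2$, which are bounded by $\tfrac12 M^2e^{M}$, $M^2e^{M}$ and $M^2e^{2M}$ respectively, summing exactly to the stated constant $M^2e^M(1.5+e^M)$. Your argument instead uses the algebraic splitting $(e^{Q_1}-I-Q_1)e^{Q_2}+Q_1(e^{Q_2}-I)+(e^{Q_2}-I-Q_2)$ and reuses Lemma~\ref{linear_one_matrix} and Lemma~\ref{norm_exp}; this is more modular (no new series manipulation) but produces a looser intermediate constant that must then be absorbed into the stated envelope, whereas the paper's grouping lands on it exactly.

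One slip in your final bookkeeping: the inequality $1\le\tfrac12 e^M$, which you claim holds ``since $M\ge 0$'', is false for $0\le M<\ln 2$, so your grouping $1+\tfrac12 e^M\le e^M$ fails for small $M$. The conclusion is unaffected: using $1\le e^M$ gives $1+\tfrac12 e^M\le 1.5\,e^M$ directly, and $\tfrac12 Me^M+\tfrac12 e^{2M}\le e^{2M}$ follows from $M\le e^M$, so your total $M^2\bigl(1+\tfrac12 e^M+\tfrac12 Me^M+\tfrac12 e^{2M}\bigr)$ is indeed at most $M^2\bigl(1.5\,e^M+e^{2M}\bigr)$ for all $M\ge 0$. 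With that one-line correction your proof is complete.
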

 \begin{proof}
  Using the previous lemmas, we obtain
  \begin{align*}
   \left\| e^{Q_1}e^{Q_2} - I - Q_1 - Q_2 \right\| & = \left\| \sum_{m+n>1}\frac {Q_1^mQ_2^n}{m!n!} \right\| \\
   & \le \sum_{n=2}^\infty \left\| \frac{Q_2^n}{n!} \right\| + \sum_{n=1}^\infty \left\| \frac{Q_1Q_2^n}{n!} \right\| + \sum_{m=2}^\infty \left\| \frac{Q_1^m}{m!}e^{Q_2} \right\| \\
   & \le \frac{\| Q_2\|^2}{2} \sum_{n=0}^\infty \left\| \frac{Q_2^n}{(n+2)!} \right\| \\
   & \qquad + \| Q_1 \| \|Q_2\| \sum_{n=0}^\infty \left\| \frac{Q_2^n}{(n+1)!} \right\| \\
   & \qquad + \| Q_1\|^2e^{\|Q_2\|} \sum_{m=0}^\infty \left\| \frac{Q_1^m}{(m+2)!} \right\| \\
   & \le \frac{\| Q_2\|^2}{2} e^{\|Q_2\|} + \| Q_1 \| \|Q_2\| e^{\|Q_2\|} \\
   & \qquad + \| Q_1\|^2e^{\|Q_2\|} e^{\|Q_1\|} \\
   & \le M^2 e^M(1.5+e^M).
  \end{align*}
 \end{proof}
\begin{cor}\label{distance_two}
 Let $Q_1$ and $Q_2$ be arbitrary matrices such that $\| Q_1\|, \| Q_2 \|\le M$. Then
 \begin{equation}
  \| e^{Q_1}e^{Q_2} - e^{Q_1+Q_2} \| \le M^2e^M (1.5+3e^M). \label{c8_eq}
 \end{equation}
\end{cor}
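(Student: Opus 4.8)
The plan is to split the difference through the common ``linear part'' $I+Q_1+Q_2$ and apply the two preceding lemmas. By the triangle inequality,
\begin{equation*}
 \left\| e^{Q_1}e^{Q_2} - e^{Q_1+Q_2} \right\| \le \left\| e^{Q_1}e^{Q_2} - I - Q_1 - Q_2 \right\| + \left\| e^{Q_1+Q_2} - I - Q_1 - Q_2 \right\|.
\end{equation*}
The first term is bounded directly by Lemma~\ref{linear_two_matrices}, giving $M^2e^M(1.5+e^M)$.

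For the second term I would apply Lemma~\ref{linear_one_matrix} with $Q = Q_1+Q_2$, yielding $\tfrac12\|Q_1+Q_2\|^2 e^{\|Q_1+Q_2\|}$. Since $\|Q_1+Q_2\|\le \|Q_1\|+\|Q_2\|\le 2M$ by the triangle inequality for the operator norm, and since $t\mapsto \tfrac12 t^2 e^t$ is increasing on $[0,\infty)$, this is at most $\tfrac12(2M)^2 e^{2M} = 2M^2 e^{2M}$.

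Adding the two estimates and factoring out $M^2 e^M$ gives
\begin{equation*}
 \left\| e^{Q_1}e^{Q_2} - e^{Q_1+Q_2} \right\| \le M^2 e^M(1.5+e^M) + 2M^2 e^{M}e^{M} = M^2 e^M(1.5 + 3e^M),
\end{equation*}
which is exactly \eqref{c8_eq}. There is no real obstacle here: the only points requiring a word of care are the subadditivity of the norm on the exponent $Q_1+Q_2$ and the monotonicity of $\tfrac12 t^2 e^t$ used to pass from $\|Q_1+Q_2\|$ to the uniform bound $2M$; both are routine. The corollary then follows immediately.
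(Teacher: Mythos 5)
Your proposal is correct and matches the paper's proof: the paper likewise bounds $\|e^{Q_1+Q_2}-I-Q_1-Q_2\|\le \tfrac12\|Q_1+Q_2\|^2 e^{\|Q_1+Q_2\|}\le 2M^2e^{2M}$ via Lemma~\ref{linear_one_matrix} and combines it with Lemma~\ref{linear_two_matrices} by the triangle inequality through $I+Q_1+Q_2$. No issues.
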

\begin{proof}
It follows from Lemma~\ref{linear_one_matrix} that 
\begin{align*}
\|e^{Q_1+Q_2} - I - Q_1 - Q_2 \| \le \frac 12 \| Q_1 + Q_2 \|^2 e^{\| Q_1+Q_2\|} 
\le 2M^2 e^{2M}, 
\end{align*} 
which together with Lemma~\ref{linear_two_matrices} implies \eqref{c8_eq}. 
\end{proof}
\begin{cor}\label{approx-linear-alt}
 Let $Q_1, \ldots, Q_n$ be matrices with $\| Q_i \| \le M$ for every $1\le i \le n$. Then 
 \begin{equation}\label{approx-linear-alt-eq}
  \| e^{Q_1}\cdots e^{Q_n} - (I+Q_1)\cdots (I+Q_n) \| \le \frac{n}{2}M^2\exp\left(\sum_{i=1}^n\| Q_i\|\right).
 \end{equation}
\end{cor}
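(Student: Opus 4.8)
The plan is to realise both products as compositions of linear maps and feed them into the telescoping estimate of Lemma~\ref{dist_kompozitum} (equivalently Corollary~\ref{dist_kompozitum_alt}), after a normalisation that prevents the exponential factors from being counted twice. Concretely, set $f_i(x)=e^{Q_i}x$ and $f_i'(x)=(I+Q_i)x$, so that $F_n=f_1\circ\cdots\circ f_n$ is left multiplication by $e^{Q_1}\cdots e^{Q_n}$ and $F_0=f_1'\circ\cdots\circ f_n'$ is left multiplication by $(I+Q_1)\cdots(I+Q_n)$; then $\|F_n-F_0\|$ in the operator norm is exactly the left-hand side of \eqref{approx-linear-alt-eq}.

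First I would record the three ingredients. By Lemma~\ref{norm_exp}, $\|f_i\|=\|e^{Q_i}\|\le e^{\|Q_i\|}$, and trivially $\|f_i'\|=\|I+Q_i\|\le 1+\|Q_i\|\le e^{\|Q_i\|}$; by Lemma~\ref{linear_one_matrix}, $\|f_i-f_i'\|=\|e^{Q_i}-I-Q_i\|\le\tfrac12\|Q_i\|^2e^{\|Q_i\|}$. The point is that a crude application of Lemma~\ref{dist_kompozitum} with the uniform choices $d=\tfrac12M^2e^M$ and norm bound $e^M$ would cost an extra factor of roughly $e^{nM}$, which is far too weak. Instead I would rescale: put $g_i=e^{-\|Q_i\|}f_i$ and $g_i'=e^{-\|Q_i\|}f_i'$. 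Then $\|g_i\|,\|g_i'\|\le 1$ and $\|g_i-g_i'\|\le\tfrac12\|Q_i\|^2\le\tfrac12M^2$, so Lemma~\ref{dist_kompozitum} (with norm bound $1$ and $d=\tfrac12M^2$) gives $\|g_1\circ\cdots\circ g_n-g_1'\circ\cdots\circ g_n'\|\le\tfrac n2 M^2$. Since $g_1\circ\cdots\circ g_n=e^{-\sum_i\|Q_i\|}F_n$ and likewise for the primed composition, multiplying through by $\exp(\sum_i\|Q_i\|)$ yields \eqref{approx-linear-alt-eq}.

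Alternatively, and essentially equivalently, one can run the telescoping sum of Lemma~\ref{dist_kompozitum} by hand without rescaling: in the bound for $\|F_i-F_{i-1}\|$ the untouched factors contribute $\prod_{j\neq i}e^{\|Q_j\|}$ to the operator norm while the middle factor $\|e^{Q_i}-I-Q_i\|$ contributes $\tfrac12\|Q_i\|^2e^{\|Q_i\|}$, and the two exponential contributions combine to exactly $\exp(\sum_{j=1}^n\|Q_j\|)$; hence $\|F_i-F_{i-1}\|\le\tfrac12M^2\exp(\sum_{j=1}^n\|Q_j\|)$, and summing over $1\le i\le n$ finishes.

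The argument is short, so there is no real obstacle, only a bookkeeping point to get right: the factor $e^{\|Q_i\|}$ furnished by Lemma~\ref{linear_one_matrix} must be precisely what completes $\prod_{j\neq i}e^{\|Q_j\|}$ to the full $\exp(\sum_{j=1}^n\|Q_j\|)$, so that no factor is dropped and none is double-counted — this is exactly what the normalisation in the main plan makes automatic.
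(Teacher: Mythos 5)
Your proposal is correct and rests on the same telescoping estimate as the paper: your ``alternative'' second paragraph is essentially verbatim the paper's own proof (bound $\|F_i-F_{i-1}\|$ via Lemma~\ref{dist_kompozitum}'s Eq.~\eqref{l4_eq1}, using $\|I+Q\|\le e^{\|Q\|}$, Lemma~\ref{norm_exp} and Lemma~\ref{linear_one_matrix}, then sum). Your main plan's normalisation $g_i=e^{-\|Q_i\|}f_i$ is just a tidy repackaging of that same bookkeeping, so both routes are sound and equivalent to the paper's argument.
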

\begin{proof}
We follow a similar procedure as in the proof of Lemma~\ref{dist_kompozitum}. Let $f_i = e^{Q_i}$ and $f'_i = (1-Q_i)$ for every $1\le i\le n$, and define $F_i$ as in \eqref{bigF}. By Lemmas~\ref{norm_exp} and \ref{linear_one_matrix}, Eq. \eqref{l4_eq1} and noting that $\| I+Q\| \le e^{\|Q \|}$ we obtain  
 \begin{align*}
  \| F_i - F_{i-1} \| &\le \| e^{Q_1}e^{Q_2}\cdots e^{Q_{i-1}} \| \|e^{Q_i}- I-Q_i\| \| (I+Q_{i+1})\cdots (I+Q_n) \| \\
  & \le \frac 12 M^2 \exp\left( \sum_{i=1}^n{\| Q_i\|} \right), 
 \end{align*}
 which by \eqref{l4_eq2} implies \eqref{approx-linear-alt-eq}. 
\end{proof}

\begin{lemma}\label{refining}
 Let $Q'_i$ and $Q''_i$, for $1\le i \le n$, be matrices such that $\| Q'_i\|, \| Q''_i \| \le M$ for a constant $M$; and let $t'_i$ and $t''_i$ be positive real numbers such that $\displaystyle \sum_{i=1}^{n} t'_i + t''_i = 1$ and $t'_i, t''_i \le q$ for every $1\le i \le n$ and some constant $q$. 
 
 Further, let $t_i = t'_i + t''_i$ and $\displaystyle Q_i = \frac{t'_i}{t_i}Q'_i + \frac{t''_i}{t_i}Q''_i$ for every $1\le i \le n$. 
 
 Then 
 \begin{equation*}
  \left\| \prod_{i=1}^{n} e^{t'_iQ'_i}e^{t''_iQ''_i} - \prod_{i=1}^{n} e^{t_iQ_i} \right\| \le n (qM)^2e^{(q+1)M}\left( 1.5 + 3 e^{qM}\right).
 \end{equation*}

\end{lemma}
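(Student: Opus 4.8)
\section*{Proof proposal}

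The plan is to reduce the estimate to a factor-by-factor comparison and then telescope, in the spirit of Lemma~\ref{dist_kompozitum}. First I would set $P_i = e^{t'_iQ'_i}e^{t''_iQ''_i}$ and $\tilde P_i = e^{t_iQ_i}$ for $1\le i\le n$. The key observation is that $t_iQ_i = t'_iQ'_i + t''_iQ''_i$, so writing $A = t'_iQ'_i$ and $B = t''_iQ''_i$ we have $\|A\|\le t'_iM\le qM$, $\|B\|\le qM$, and $\tilde P_i = e^{A+B}$ while $P_i = e^Ae^B$. Applying Corollary~\ref{distance_two} with the bound $qM$ in place of $M$ then gives the per-factor estimate
\[
 \|P_i - \tilde P_i\| \le (qM)^2e^{qM}(1.5 + 3e^{qM}) =: d
\]
for every $i$. (Here $Q_i$, being a convex combination of $Q'_i$ and $Q''_i$, satisfies $\|Q_i\|\le M$, although this is not even needed for the factor estimate itself.)

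Next I would control the norms of the partial products. By Lemma~\ref{norm_exp}, $\|P_i\|\le e^{t'_iM}e^{t''_iM} = e^{t_iM}$ and $\|\tilde P_i\|\le e^{t_i\|Q_i\|}\le e^{t_iM}$. Since $\sum_{i=1}^n t_i = \sum_{i=1}^n(t'_i+t''_i) = 1$, for each fixed $i$ this yields
\[
 \|P_1\cdots P_{i-1}\|\,\|\tilde P_{i+1}\cdots\tilde P_n\| \le e^{(\sum_{k\ne i}t_k)M} = e^{(1-t_i)M}\le e^M .
\]

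Finally I would run the telescoping argument of Lemma~\ref{dist_kompozitum}: writing $F_i = P_1\cdots P_i\tilde P_{i+1}\cdots\tilde P_n$, so that $F_n = \prod_i P_i$ and $F_0 = \prod_i\tilde P_i$, the identity $F_i - F_{i-1} = P_1\cdots P_{i-1}(P_i - \tilde P_i)\tilde P_{i+1}\cdots\tilde P_n$ combined with the two displays above gives $\|F_i - F_{i-1}\|\le e^M d$, and summing over $i=1,\dots,n$ produces $\|F_n - F_0\|\le n e^M d = n(qM)^2e^{(1+q)M}(1.5+3e^{qM})$, which is exactly the claimed inequality. Equivalently one can invoke Corollary~\ref{dist_kompozitum_alt} directly with the constant $A = e^M$.

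There is no serious obstacle here; the only points needing a little care are (i) matching Corollary~\ref{distance_two} at the scale $qM$ rather than $M$, and (ii) exploiting $\sum_i t_i = 1$ so that every partial product of the factors is bounded by $e^M$ rather than by a quantity growing with $n$ — this is precisely what keeps the final bound linear in $n$.
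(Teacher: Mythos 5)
Your proof is correct and follows essentially the same route as the paper's: a per-factor application of Corollary~\ref{distance_two} at scale $qM$, partial-product bounds via Lemma~\ref{norm_exp} and $\sum_i t_i = 1$ giving the factor $e^M$, and the telescoping of Lemma~\ref{dist_kompozitum}/Corollary~\ref{dist_kompozitum_alt}. Nothing is missing.
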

\begin{proof}
We again follow the procedure used in the proof of Lemma~\ref{dist_kompozitum}. Denote $f_i = e^{t'_iQ'_i}e^{t''_iQ''_i}$ and $f'_i = e^{t_iQ_i}$ for every $1\le i \le n$. Using Lemma~\ref{norm_exp} we obtain 
\begin{align*}
 \| f_1\ldots f_{k-1} \| & \le \prod_{i=1}^{k-1} \| e^{t'_iQ'_i} \| \| e^{t''_iQ''_i} \| \le \prod_{i=1}^{k-1} e^{t'_iM} e^{t''_iM} = \exp\left\{{M\sum_{i=1}^{k-1}t'_i + t''_i}\right\}
\end{align*}
and similarly, 
\begin{equation*}
 \| f'_{k+1}\ldots f'_{n} \| \le \exp\left\{ M\sum_{i=k+1}^{n}t_i \right\}.
\end{equation*}
Therefore, clearly, 
\begin{equation*}
 \| f_1\ldots f_{k-1} \|\| f'_{k+1}\ldots f'_{n} \| \le e^M.
\end{equation*}
By Corollary~\ref{distance_two} and because of $\| t'_iQ'_i \|, \| t''_iQ''_i \| \le qM$, we have
\begin{equation*}
 \| f_i - f'_i \| \le (qM)^2 e^{qM}(1.5+3e^{qM}). 
\end{equation*}
Finally, by Corollary~\ref{dist_kompozitum_alt}, we obtain
\begin{equation*}
\| F_{n} - F_0 \| \le  n (qM)^2 e^{(q+1)M}(1.5+3e^{qM}), 
\end{equation*}
where notation \eqref{bigF} is used. 
\end{proof}

\end{document}